\newtheorem{theorem}{Theorem}
\newtheorem{lemma}[theorem]{Lemma}
\newtheorem{proposition}[theorem]{Proposition}
\newtheorem{corollary}[theorem]{Corollary}
\theoremstyle{definition}
\newtheorem{definition}[theorem]{Definition}
\newcommand\xqed[1]{%
  \leavevmode\unskip\penalty9999 \hbox{}\nobreak\hfill
  \quad\hbox{{#1}}}
\newcommand\qeddef{\xqed{${\scriptstyle\blacktriangleleft}$}}
\newcommand\qedthm{\xqed{${\scriptstyle\blacksquare}$}}
\title{Strengthening Consistency Results
in Modal Logic}
\author{Samuel Allen Alexander
\institute{US Securities and Exchange Commission\\New York, USA}
\email{samuelallenalexander@gmail.com}
\and
Arthur Paul Pedersen
\institute{City University of New York\\
New York, USA}
\email{apedersen@cs.ccny.cuny.edu}
}
\begin{document}
\maketitle





\begin{abstract}
 A fundamental question asked in modal logic is whether a given theory
    is consistent. But consistent with what? A typical way to address this question identifies
   a choice of background knowledge axioms (say, S4, D, etc.)\ and then shows
    the assumptions codified by the theory in question to be consistent with those background axioms.
    But determining the specific choice and division of background axioms is, at least sometimes, little more than
    tradition. This paper introduces  \emph{generic theories}
    for propositional modal logic to address consistency results in a more robust way. As building blocks for background knowledge, generic theories provide a standard for categorical determinations of consistency.  
   We argue that the results and methods of this paper 
    help to elucidate problems in epistemology and enjoy sufficient scope and power to have purchase on problems bearing on modalities in judgement, inference, and decision making.
\end{abstract}
\section{Introduction}

Many treatments of epistemological paradoxes in modal logic proceed along the following lines. Begin with some enumeration of assumptions that are individually plausible  but when taken together fail to be jointly consistent (or at any rate fail to stand to reason in some way). Thereupon proceed to propose a resolution to the emerging paradox that identifies one or more assumptions that may be comfortably discarded or weakened and that in the presence of the remaining assumptions circumvents the troubling inconsistency defining the paradox \cite{haack1978philosophy}
 (cf.  Chow \cite{chow1998surprise} and de Vos et al. \cite{de2023solutions}). Typical among such assumptions are logical standards expressed in the form of inference rules and axioms pertaining to knowledge and belief, such as axiom scheme $\mathbf{K}$ --- that is to say, the distributive axiom scheme of the form $\mathrm{K}(\varphi\rightarrow\psi)\rightarrow (\mathrm{K}\varphi\rightarrow \mathrm{K}\psi)$.

 The choice of precisely \emph{which} assumptions to temper can, at times, have an element of arbitrariness to it, especially when the choice is made from among several independent alternatives underpinning distinct resolutions in the absence of clear criteria or compelling grounds for distinguishing among them.   In the present paper, we introduce a criterion for addressing this predicament based on the \emph{genericity} of what a resolution assumes.

As a standard for knowledge, a theory is generic when its factivity cannot be overturned however the questions it leaves open are answered and what is known accordingly grows. Generic theories enjoy various desirable
properties which are common in formal epistemology --- arbitrary unions of generic
theories, for example, are generic. We present both positive and negative results turning on genericness, which cast light on the structure of popular logics for belief and knowledge.

The concept of generic theories, as introduced in \cite{alexander2015fast}
and \cite{alexander2020self}  for quantified modal logic, emerged in response to Carlson's proof \cite{carlson2000knowledge} of a conjecture due to
Reinhardt \cite{reinhardt1985absolute}. Carlson's proof, despite its significance, was limited by its dependency on a somewhat arbitrary choice of background knowledge axioms. Carlson proof, subject to but small changes, is likewise valid for various other sets of
background axioms.  The present paper examines generic
 theories for propositional modal logic. In our concluding remarks we discuss the developments of this paper in connection with work done to generalize Carlson’s consistency
result.

The paper is organized as follows.
    In Section \ref{prelimsection} we state preliminaries.
    In Section \ref{formalizationofknowerparadoxsection} we state a propositional version of the Knower Paradox: a certain theory, consisting of standard background knowledge axioms plus an axiom intended to be read as ``This sentence is known to be false,'' is inconsistent. We discuss a possible resolution to the paradox: weaken the background knowledge axioms in order to render the theory consistent.
    In Section \ref{generictheoriessection} we introduce generic and closed generic theories.
    In Section \ref{twogeneralizedconsistencystatementssection} we use generic and closed generic theories to state very generalized versions of the consistency result from Section \ref{formalizationofknowerparadoxsection}.
    In Section \ref{negativeresultssecn} we state some negative results about genericness and closed genericness. Proofs of these negative results naturally lead to the construction of exotic models which satisfy certain standard knowledge axioms while failing certain other standard knowledge axioms.
    In Section \ref{conclusionsection} we conclude the paper with a high-level discussion.
    In Appendix \ref{appendix:proofs} we give proofs of some of the claims made in the above sections.

\section{Preliminaries}
\label{prelimsection}

Throughout, we fix a nonempty set of symbols called \emph{propositional atoms}
and a symbol $\mathrm{K}$ which is not a propositional atom. The following logic is a propositional
version of Carlson's so-called \emph{base logic} \cite{carlson2000knowledge}
(cf.\ \cite{alexander2013axiomatic} and \cite{aldiniknowledge}).

\begin{definition}
\label{logicdefn} The set of \emph{formulas} is defined recursively as follows: 
\begin{itemize}
\item[(i)] Every propositional atom is a formula;
\item[(ii)] Whenever $\varphi$ and $\psi$ are formulas, so are $\neg\varphi$,
$(\varphi\wedge\psi)$, $(\varphi\vee\psi)$, and $(\varphi\rightarrow\psi)$; and
\item[(iii)] Whenever $\varphi$ is formula, so too is $\mathrm{K}(\varphi)$.
\end{itemize}
A formula is said to be \emph{basic} if it is either a propositional atom or a formula of the form $\mathrm{K}(\varphi)$ for some formula $\varphi$.  A set of formulas is called a \emph{theory}.\qeddef
        \end{definition}

We adopt  standard conventions for omitting parentheses.  Parentheses omitted from conditional formulas  are assumed to be right-nested; thus, for example, we write $\phi\rightarrow\psi\rightarrow\rho$ for 
$\phi\rightarrow (\psi\rightarrow\rho)$, and similarly for
longer chains of implications.

\begin{definition} A \emph{model} is a function mapping each basic formula to a truth value in $\{\mbox{True},\mbox{False}\}$.\qeddef
\end{definition}
Thus, in contrast with classical treatments of semantics for modalities, a model assigns truth values not only to propositional atoms but also to formulas prefixed with $\mathrm{K}$.

 We may define a binary relation $\models$ from models to basic formulas in the usual way --- that is, by stipulating that $\mathscr{M}\models\varphi$ just in case $\mathscr{M}$ assigns to $\varphi$ the value \mbox{True}. The next definition extends this relation to all formulas. We adopt the standard convention to write $\mathscr{M}\not\models\varphi$ if it is not the case that $\mathscr{M}\models\varphi$.

\begin{definition} Let $\mathscr{M}$ be a model. Define formula $\varphi$ to be \emph{true} in $\mathscr{M}$, $\mathscr{M}\models\varphi$, by recursion on $\varphi$:
\begin{itemize}
\item[(i)] If $\varphi$ is a basic formula, then $\mathscr M\models\varphi$ if and only if
$\mathscr M$ assigns to $\varphi$ the value $\mbox{True}$;
\item[(ii)]
$\mathscr M\models\neg\varphi$ if and only if $\mathscr M\not\models\varphi$;
\item[(iii)]
$\mathscr M\models\varphi\wedge\psi$ if and only if both $\mathscr M\models\varphi$ and
    $\mathscr M\models\psi$; 
\item[(iv)]
$\mathscr M\models\varphi\vee\psi$ if and only if either $\mathscr M\models\varphi$ or
    $\mathscr M\models\psi$; and
\item[(v)]
$\mathscr M\models\varphi\rightarrow\psi$ if and only if either $\mathscr M\not\models\varphi$
    or $\mathscr M\models\psi$.
\end{itemize}
Given a theory $T$, we write $\mathscr M\models T$ just in case  $\mathscr M\models\varphi$ for every $\varphi\in T$.\qeddef
\end{definition}

Entailment and validity  are given standard treatment. 

\begin{definition}
\label{validitydefn}
A theory $T$ is said to \emph{entail} a formula $\varphi$, written $T\models\varphi$, if
for all models $\mathscr M$, $\mathscr M\models T$ implies $\mathscr M\models\varphi$. A formula $\varphi$ is said to be \emph{valid}, written $\models\varphi$, if $\emptyset\models \varphi$.\qeddef
\end{definition}
Since 
modal formulas of the form $\mathrm{K}\varphi$ are treated like propositional atoms, it follows that if $p$ is a propositional atom, then $\mathrm{K}p\vee \neg \mathrm{K}p$ is valid but
$\mathrm{K}(p\vee\neg p)$ is not.
Routine argument establishes compactness. A useful result is the following corollary of compactness.

\begin{restatable}{lemma}{compactness}
\label{compactness}
Let $T$ be a theory and $\varphi$ be a formula.  Then $T\models\varphi$ if and only if
    there is a finite sequence of formulas $\varphi_1,\ldots,\varphi_n\in T$ for which
    $\models\varphi_1\rightarrow\cdots\rightarrow\varphi_n\rightarrow\varphi$.
\end{restatable}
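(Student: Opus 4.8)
The plan is to derive this result as a straightforward corollary of ordinary compactness for this propositional-style logic, which the authors note is routine. The key observation is that because formulas of the form $\mathrm{K}\varphi$ are treated semantically exactly like propositional atoms — a model is just a truth assignment to basic formulas — this logic is, up to a change of alphabet, nothing but classical propositional logic over the (possibly infinite) set of basic formulas. So the deduction-theorem and compactness machinery of classical propositional logic transfers verbatim.

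First I would prove the right-to-left direction, which needs no compactness at all: suppose $\varphi_1,\dots,\varphi_n \in T$ and $\models \varphi_1 \rightarrow \cdots \rightarrow \varphi_n \rightarrow \varphi$. Given any model $\mathscr{M}$ with $\mathscr{M} \models T$, we have $\mathscr{M} \models \varphi_i$ for each $i$, and then $n$ applications of the truth clause for $\rightarrow$ (Definition, clause (v)) peel off the antecedents one at a time to yield $\mathscr{M} \models \varphi$. Hence $T \models \varphi$.

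For the left-to-right direction I would invoke compactness. Assume $T \models \varphi$. By compactness there is a finite subset $\{\varphi_1,\dots,\varphi_n\} \subseteq T$ with $\{\varphi_1,\dots,\varphi_n\} \models \varphi$. Now I want to collapse this finite entailment into validity of a single implication. The cleanest route is a finite deduction theorem: for any formulas $\psi_1,\dots,\psi_n,\chi$, one has $\{\psi_1,\dots,\psi_n\} \models \chi$ if and only if $\models \psi_1 \rightarrow \cdots \rightarrow \psi_n \rightarrow \chi$. This is itself immediate from the truth clauses: unwinding clause (v) $n$ times shows that $\mathscr{M} \models \psi_1 \rightarrow \cdots \rightarrow \psi_n \rightarrow \chi$ holds iff ($\mathscr{M} \models \psi_i$ for all $i$ implies $\mathscr{M} \models \chi$), and quantifying over all $\mathscr{M}$ gives the equivalence. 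Applying this with $\psi_i = \varphi_i$ and $\chi = \varphi$ yields $\models \varphi_1 \rightarrow \cdots \rightarrow \varphi_n \rightarrow \varphi$, as desired. (If $n=0$, i.e. $\emptyset \models \varphi$, the "finite sequence" is empty and the implication is just $\varphi$ itself; this edge case is harmless.)

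The only real content is compactness itself, which the excerpt explicitly licenses us to assume ("Routine argument establishes compactness"), so I expect no genuine obstacle. The one point deserving a line of care is making precise what "compactness" means here and why it holds: since a model is exactly a $\{\text{True},\text{False}\}$-valuation of the set of basic formulas, and every formula's truth value is determined by its finitely many basic subformulas, satisfiability in this logic reduces to propositional satisfiability over the atom set consisting of all basic formulas, so the classical compactness theorem applies directly. With that in hand, the lemma is the deduction theorem plus one appeal to compactness, and the proof is short.
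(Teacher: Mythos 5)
Your argument is correct and is exactly the routine argument the paper has in mind: the paper gives no explicit proof of this lemma, merely asserting that "routine argument establishes compactness" and presenting the lemma as its corollary, and your reduction to classical propositional compactness over the alphabet of basic formulas, followed by the finite semantic deduction theorem, is the standard way to discharge it. No gaps.
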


 Lemma \ref{compactness} provides a basis for adopting the following proof-theoretic terminology in what follows.

\begin{definition} A theory $T$ is said to be \emph{consistent} if there is a model $\mathscr{M}$ for which $\mathscr{M}\models T$. \qeddef
\end{definition}

The following definition captures the familiar notion of closedness under the $\mathrm{K}$ operator.

\begin{definition}
    A theory $T$ is \emph{closed} if $\bigl\{\,\mathrm{K}\varphi\;:\;\varphi\in T\,\bigr\}\subseteq T$.   \qeddef
\end{definition}
 Thus a theory $T$  is closed just in case for every formula $\varphi$, if $\varphi\in T$, then $\mathrm{K}\varphi\in T$.
\begin{definition}
     We adopt the following conventions for naming standard schemas:
    \begin{itemize}
        \item[]
        $\mathbf{V}$ is the theory consisting of all formulas of the form $\mathrm{K}\varphi$ such that
            $\varphi$ is valid (Definition \ref{validitydefn}).
        \item[]
        $\mathbf{K}$ is the theory consisting of all formulas of the form
            $\mathrm{K}(\varphi\rightarrow\psi)\rightarrow (\mathrm{K}\varphi\rightarrow \mathrm{K}\psi)$.
        \item[]
        $\mathbf{T}$ is the theory consisting of all formulas of the form
            $\mathrm{K}\varphi\rightarrow\varphi$.
        \item[]
        $\mathbf{KK}$ (sometimes also called $\mathbf{4}$) is the theory consisting of all formulas of the form
            $\mathrm{K}\varphi\rightarrow \mathrm{K}\mathrm{K}\varphi$.\qeddef
    \end{itemize}
\end{definition}
We conclude this section with an observation about necessitation
(proved in Appendix \ref{appendix:proofs}).

\begin{restatable}{lemma}{necessitation}{\textup{(Simulated Necessitation)}}
\label{necessitation}
    Let $T$ be a closed theory. If $T$ includes both
        $\mathbf{V}$ and $\mathbf{K}$, then for every formula $\varphi:$\;
        if $T\models\varphi$, then $T\models \mathrm{K}\varphi$.
\end{restatable}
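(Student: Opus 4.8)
The plan is to reduce $T\models\varphi$ to a single finite implicational validity via Lemma \ref{compactness}, push a $\mathrm{K}$ through that validity using $\mathbf{V}$, and then peel the premises off one at a time using the fact that $T$ is closed together with the instances of $\mathbf{K}$. First I would invoke Lemma \ref{compactness}: from $T\models\varphi$ we obtain $\varphi_1,\ldots,\varphi_n\in T$ with $\models\varphi_1\rightarrow\cdots\rightarrow\varphi_n\rightarrow\varphi$. Since $\mathbf{V}\subseteq T$, the formula $\mathrm{K}(\varphi_1\rightarrow\cdots\rightarrow\varphi_n\rightarrow\varphi)$ belongs to $T$, so $T\models\mathrm{K}(\varphi_1\rightarrow\cdots\rightarrow\varphi_n\rightarrow\varphi)$; and since $T$ is closed and each $\varphi_i\in T$, we also have $\mathrm{K}\varphi_i\in T$, hence $T\models\mathrm{K}\varphi_i$, for $i=1,\ldots,n$.

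Next I would establish, by induction on $k\ge 0$, the auxiliary claim that for all formulas $\psi_1,\ldots,\psi_k,\chi$: if $T\models\mathrm{K}(\psi_1\rightarrow\cdots\rightarrow\psi_k\rightarrow\chi)$ and $T\models\mathrm{K}\psi_j$ for every $j\le k$, then $T\models\mathrm{K}\chi$. The base case $k=0$ is immediate. For the inductive step, the relevant instance of $\mathbf{K}$ (obtained by taking the $\varphi$ of the schema to be $\psi_1$ and the $\psi$ of the schema to be $\psi_2\rightarrow\cdots\rightarrow\psi_k\rightarrow\chi$, using the right-nesting convention) lies in $T$; combined with $T\models\mathrm{K}\psi_1$ and propositional reasoning about $\models$, it yields $T\models\mathrm{K}(\psi_2\rightarrow\cdots\rightarrow\psi_k\rightarrow\chi)$, and the induction hypothesis then applies. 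Instantiating the claim with $k=n$, $\psi_i=\varphi_i$, and $\chi=\varphi$ gives $T\models\mathrm{K}\varphi$, which is what we want.

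I do not expect a serious obstacle here; the proof is essentially routine. The only points that require care are the bookkeeping in the iterated use of $\mathbf{K}$ — in particular, lining up each $\mathbf{K}$ instance with the antecedent currently being stripped off — and the degenerate case $n=0$, where the argument collapses to the observation that $\models\varphi$ already places $\mathrm{K}\varphi$ in $\mathbf{V}\subseteq T$. Everything else rests only on the fact that $\models$ supports propositional inference, so that $T\models\alpha\rightarrow\beta$ and $T\models\alpha$ together give $T\models\beta$, which is immediate from Definition \ref{validitydefn}.
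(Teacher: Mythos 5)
Your proposal is correct and follows essentially the same route as the paper's own proof: compactness to get a finite implicational validity, $\mathbf{V}$ to know that validity, closure to get $\mathrm{K}\varphi_i$ for each premise, and repeated applications of $\mathbf{K}$ to strip the antecedents. Your explicit induction merely spells out the ``repeated applications of $\mathbf{K}$'' step that the paper leaves informal, and your remark about the degenerate case $n=0$ is a harmless extra precaution.
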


\section{A Formalization of the Knower Paradox}
\label{formalizationofknowerparadoxsection}

We will use a propositional version of the well-known Knower Paradox \cite{kaplan1960paradox} to illustrate the ideas
of this paper. The paradox is usually formalized in first-order modal logic, where appeal to G\"odel's Diagonal Lemma admits construction of the problematic sentence without having to assume it as an axiom. In our propositional version, we instead assume the problematic sentence axiomatically, allowing us to focus on the epistemological contents of the paradox without arithmetical distractions. 

\begin{theorem}[The Knower Paradox]
\label{knowerparadox}
    Let $p$ be some propositional atom.
    Let $T_{KP}$ be the smallest closed theory
    which contains$:$
    \begin{itemize}
        \item[\textup{(i)}]
        $\mathbf V$, $\mathbf K$, and $\mathbf T$
        \item[\textup{(ii)}]
        $p\leftrightarrow \mathrm{K}\neg p$\hspace{10em} ``This sentence is known to be false''
    \end{itemize}
   Then the theory $T_{KP}$ is inconsistent.\qedthm
\end{theorem}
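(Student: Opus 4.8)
The plan is to establish directly that $T_{KP}\models p\wedge\neg p$; since no model can satisfy a contradiction, this shows $T_{KP}$ has no model, i.e.\ is inconsistent. The underlying argument is the familiar diagonal one, with Simulated Necessitation (Lemma~\ref{necessitation}) playing the role that the necessitation rule plays in a proof-theoretic presentation.

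First I would show $T_{KP}\models\neg p$. The formula $\mathrm K\neg p\rightarrow\neg p$ is the instance of $\mathbf T$ with $\varphi:=\neg p$, and $p\rightarrow\mathrm K\neg p$ is (one half of) the biconditional in (ii); chaining these conditionals gives $p\rightarrow\neg p$, and $(p\rightarrow\neg p)\rightarrow\neg p$ is a propositional validity. Hence by Lemma~\ref{compactness} (equivalently, by directly unwinding the semantic clauses for $\rightarrow$ and $\neg$) every model of $T_{KP}$ makes $\neg p$ true.

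Next I would apply Simulated Necessitation. By construction $T_{KP}$ is closed and contains both $\mathbf V$ and $\mathbf K$, so Lemma~\ref{necessitation} applies and from $T_{KP}\models\neg p$ we get $T_{KP}\models\mathrm K\neg p$. But (ii) also yields $\mathrm K\neg p\rightarrow p$, so $T_{KP}\models p$. Combining, $T_{KP}\models p$ and $T_{KP}\models\neg p$, whence $T_{KP}\models p\wedge\neg p$; since no model assigns $p$ both truth values, $T_{KP}$ is inconsistent.

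I do not expect a genuine obstacle here beyond bookkeeping. The one step meriting care is the appeal to Lemma~\ref{necessitation}: its hypotheses --- that the theory is closed and includes $\mathbf V$ and $\mathbf K$ --- must be matched against the definition of $T_{KP}$, and all of them hold at once precisely because $T_{KP}$ was defined to be the \emph{smallest closed} theory containing $\mathbf V$, $\mathbf K$, $\mathbf T$, and the axiom in (ii). Everything else is ordinary propositional reasoning, discharged through the semantics of $\models$ or, if one prefers a syntactic packaging, through Lemma~\ref{compactness}.
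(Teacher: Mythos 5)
Your proof is correct and follows essentially the same route as the paper's: derive $T_{KP}\models\neg p$ from $\mathbf T$ and axiom (ii), apply Lemma~\ref{necessitation} (whose hypotheses $T_{KP}$ satisfies by construction) to get $T_{KP}\models\mathrm K\neg p$, and conclude $T_{KP}\models p$ from (ii), yielding the contradiction. The only difference is that you spell out the propositional bookkeeping the paper leaves implicit.
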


\begin{proof}
From schema  $\mathbf{T}$ and axiom (ii), it follows that  $T_{KP}\models \neg p$ and therefore $T_{KP}\models \mathrm{K}\neg p$ by Lemma \ref{necessitation}, whence $T_{KP}\models  p$ by axiom (ii). Hence, $T_{KP}$ is inconsistent.
\end{proof}

The next theorem provides one way the theory
in Theorem \ref{knowerparadox} may be weakened in order to restore consistency
(and so constitutes a candidate for resolving the paradox, in the sense of
Haack \cite{haack1978philosophy} or Chow \cite{chow1998surprise}).

\begin{theorem}
\label{specialcase}
    Let $p$ be some propositional atom.
    Inductively, let $(T^-_{KP})_0$ be the smallest closed theory which contains$:$
 \begin{itemize}
        \item[\textup{(i)}]
        $\mathbf V$ and $\mathbf K$
        \item[\textup{(ii)}]
        $p\leftrightarrow \mathrm{K}\neg p$\hspace{10em} ``This sentence is known to be false''
    \end{itemize}
In addition, let $T^-_{KP}$ be the theory which contains$:$
    \begin{itemize}
        \item[\textup{(a)}]
        $(T^-_{KP})_0$.
        \item[\textup{(b)}] $\mathbf T$.
    \end{itemize}
  Then theory  $T^-_{KP}$ is consistent.\qedthm
\end{theorem}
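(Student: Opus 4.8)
The plan is to build a single model $\mathscr M$ witnessing the consistency of $T^-_{KP}=(T^-_{KP})_0\cup\mathbf T$. The guiding observation is structural: in contrast to $T_{KP}$ of Theorem~\ref{knowerparadox}, the theory $T^-_{KP}$ is \emph{not} closed, since $\mathbf T$ is adjoined \emph{after} the closure is taken. Consequently Simulated Necessitation (Lemma~\ref{necessitation}) is not available for instances of $\mathbf T$, and the fatal chain ``$T_{KP}\models\neg p$, hence $T_{KP}\models\mathrm K\neg p$, hence $T_{KP}\models p$'' of the Knower Paradox is interrupted at its middle step. So I would look for a model in which $\neg p$ is true while $\mathrm K\neg p$ is false.

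Concretely, let $\mathcal K=\{\varphi:(T^-_{KP})_0\models\varphi\}$ be the set of semantic consequences of $(T^-_{KP})_0$, and define $\mathscr M$ on basic formulas by declaring $\mathscr M(\mathrm K\varphi)$ to hold precisely when $\varphi\in\mathcal K$ (for every formula $\varphi$), declaring $\mathscr M(p)$ to hold precisely when $\neg p\in\mathcal K$, and assigning every other propositional atom the value true. Since $\mathcal K$ is defined from $(T^-_{KP})_0$ and the entailment relation alone, with no reference to $\mathscr M$, this prescription is non-circular; informally, $\mathscr M$ knows exactly the $(T^-_{KP})_0$-consequences and nothing more. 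I would then establish two claims. \emph{Claim 1: $\mathscr M\models(T^-_{KP})_0$.} Using the elementary fact that the smallest closed theory containing a set $S$ is $\{\mathrm K^n\chi:n\ge 0,\ \chi\in S\}$ (with $\mathrm K^0\chi=\chi$), it suffices to check $\mathscr M\models\mathrm K^n\chi$ for all $n\ge 0$ and $\chi\in\mathbf V\cup\mathbf K\cup\{p\leftrightarrow\mathrm K\neg p\}$. For $n\ge 1$ this reduces to $\mathrm K^{n-1}\chi\in\mathcal K$, which is immediate because $\mathrm K^{n-1}\chi$ itself belongs to $(T^-_{KP})_0$. For $n=0$ one checks the three kinds of $\chi$: validity of $\varphi$ yields $\varphi\in\mathcal K$, so the $\mathbf V$-instances hold; closure of $\mathcal K$ under modus ponens --- if $(T^-_{KP})_0$ entails both $\alpha\to\beta$ and $\alpha$ then it entails $\beta$ --- makes the $\mathbf K$-instances hold; and $\mathscr M(p)$ and $\mathscr M(\mathrm K\neg p)$ were defined to have the same truth value, so $\mathscr M\models p\leftrightarrow\mathrm K\neg p$. \emph{Claim 2: $\mathscr M\models\mathbf T$.} If $\mathscr M\models\mathrm K\varphi$ then $\varphi\in\mathcal K$, i.e., $(T^-_{KP})_0\models\varphi$, and hence $\mathscr M\models\varphi$ by Claim 1; thus $\mathscr M\models\mathrm K\varphi\to\varphi$ for every $\varphi$. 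Together the claims give $\mathscr M\models T^-_{KP}$.

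The step I expect to require the most care is the interaction between the Knower-axiom clause of Claim 1 and the $\varphi=\neg p$ instance of Claim 2: one wants to be sure no circularity has crept in, and in particular that $\mathscr M\models\mathrm K\neg p$ does not secretly hold. This is settled by noting that $(T^-_{KP})_0\not\models\neg p$ --- the ``omniscient'' model $\mathscr N$ with $\mathscr N(\mathrm K\psi)$ true for every $\psi$ and $\mathscr N(p)$ true satisfies all of $(T^-_{KP})_0$ while falsifying $\neg p$ --- so that $\neg p\notin\mathcal K$ and $\mathscr M\models\neg p$ as desired. (Alternatively this can be read off from Claim 1 after the fact: were $(T^-_{KP})_0\models\neg p$, Claim 1 would force $\mathscr M\models\neg p$ while the definition of $\mathscr M(p)$ would force $\mathscr M\models p$, which is impossible.) Everything else is a routine unwinding of the truth definition given in the Preliminaries.
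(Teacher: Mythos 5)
Your proof is correct and takes essentially the same route as the paper's: your $\mathscr M$ is exactly the paper's model $\mathscr M_{(T^-_{KP})_0,S}$ with $p\notin S$ (your coupling of $\mathscr M(p)$ to ``$\neg p\in\mathcal K$'' collapses to $\mathscr M\not\models p$ once your omniscient model $\mathscr N$ --- the paper's $\mathscr M_{T_\infty,S'}$ --- shows $\neg p\notin\mathcal K$), and your Claim~2 is the paper's verification of $\mathbf T$. The only difference is packaging: the paper abstracts your $\mathbf V$/$\mathbf K$ checks into the genericity of $\mathbf V\cup\mathbf K$ and obtains the theorem as a special case of Theorem~\ref{firstmaintheorem}.
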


Observe that the Knower Paradox (Theorem \ref{knowerparadox}), so
formalized,
rests on the assumption that the knower know 
its own truthfulness. The key difference between $T_{KP}$
and $T^-_{KP}$ is that,
while the schema $\mathrm{K}\varphi\rightarrow\varphi$ is included in both theories,
only $T_{KP}$ includes the schema $\mathrm{K}(\mathrm{K}\varphi\rightarrow\varphi)$.
Some treatments\footnote{See \cite{cross2001paradox, cross2012paradox} for an exception.} of the Knower Paradox do not explicitly
include $\mathrm{K}(\mathrm{K}\varphi\rightarrow\varphi)$ as an assumption at all, instead including
$\mathrm{K}\varphi\rightarrow\varphi$ and using a logic where the
\emph{rule of necessitation} holds---the rule permitting one to conclude $T\models\mathrm{K}\varphi$ from
$T\models\varphi$. In such logics, if $T$ contains the schema $\mathrm{K}\varphi\rightarrow\varphi$,
then trivially $T\models \mathrm{K}\varphi\rightarrow\varphi$, so by necessitation,
$T\models \mathrm{K}(\mathrm{K}\varphi\rightarrow\varphi)$. Thus, $\mathrm{K}(\mathrm{K}\varphi\rightarrow\varphi)$ sneaks in
implicitly, in such logics.

The logic (Definition \ref{logicdefn}) studied in this paper does not presume
the rule of necessitation. The rule of necessitation can be simulated in our logic
by using Lemma \ref{necessitation}, but only if the Lemma's conditions are met---which,
in the case of $T^-_{KP}$, they are not, as $T^-_{KP}$ is not closed.
Thus, it becomes possible to weaken knowledge-of-factivity without weakening factivity itself. Theorem \ref{specialcase} shows that doing so is
one possible resolution, in the sense of
Haack \cite{haack1978philosophy} or Chow \cite{chow1998surprise}, to the paradox.\footnote{The same technique has been used to resolve (in Haack's or Chow's sense) a version of the surprise exam paradox \cite{aldiniknowledge};
        to resolve a version of
        Fitch's paradox \cite{alexander2013axiomatic}; and to
        construct a machine that knows its own code \cite{alexander2014machine}.
        Aldini et al suggest
        \cite{aldiniknowledge} it might be possible to \emph{simultaneously} resolve multiple paradoxes at once by dropping
        $\mathrm{K}(\mathrm{K}\varphi\rightarrow\varphi)$, i.e., the \emph{union} of \emph{multiple} paradoxically inconsistent theories might be consistent when so weakened.} See \cite{aldiniknowledge, stjernberg2009restricting} for discussion about the weakening of knowledge-of-factivity.
Note that this requires departing from Kripke semantics, as
the rule of necessitation always holds in Kripke semantics.

Rather than prove Theorem \ref{specialcase} directly, we will
(in Section \ref{twogeneralizedconsistencystatementssection})
prove a pair of more general theorems,
and Theorem \ref{specialcase} is a special case of either one of them.
In order to state the more general theorems, we need to first introduce certain notions of
genericity.

\section{Generic and Closed Generic Theories}
\label{generictheoriessection}

The following definition is a variant of Carlson's concept of a \emph{knowing entity} 
\cite{carlson2000knowledge}.

\begin{definition}
    Let $T$ be a theory, and let $S$ be a set of propositional atoms.
    Let $\mathscr M_{T,S}$ be the model defined by stipulating:
    \begin{itemize}
        \item[\textup{(i)}]  For any propositional atom $p$:\;
       $\mathscr M_{T,S}\models p$ if and only if $p\in S$; and
        \item[\textup{(ii)}] For any formula of the form $\mathrm{K}\varphi$:\; $\mathscr M_{T,S}\models \mathrm{K}\varphi$
            if and only if $T\models\varphi$.\qeddef
    \end{itemize}
\end{definition}
The model $\mathscr M_{T,S}$ may be loosely interpreted to be that of an agent who knows exactly the consequences of theory $T$ in a world in which all propositions from $S$ are true. We will see that these models 
are useful for establishing consistency results. 

The following definition strengthens the notion of consistency.

\begin{definition}  A theory $T$ is said to be \emph{generic}
(resp.\ \emph{closed generic})
   if for each set $S$ of propositional atoms and
   each theory (resp.\ \emph{closed theory})
   $T'$:\; if $T'\supseteq T$, then
        $\mathscr M_{T',S}\models T$.\qeddef
\end{definition}

 A theory $T$ is generic
 when $T$ is known regardless of contingent facts $S$ and however theoretical knowledge might grow in conjunction with them. Generic theories are theories that
cannot be made false by the addition of more information.

We catalogue basic properties of genericity.
\begin{proposition} Genericity enjoys the following properties$:$
\label{basicfacts}
    \begin{itemize}
        \item[\textup{(1)}]
        Unions of generic theories are generic$;$
        \item[\textup{(2)}]
        Unions of closed generic theories are closed generic$;$
        \item[\textup{(3)}]
        Every generic theory is closed generic$;$
        \item[\textup{(4)}] $\mathbf V$ is generic\textup{; and}
        \item[\textup{(5)}] $\mathbf K$ is generic.\qedthm
    \end{itemize}
\end{proposition}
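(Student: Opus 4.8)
The plan is to handle the five items essentially independently, in two groups: items (1)--(3) are ``soft'' consequences of how genericity is quantified, while items (4)--(5) require unwinding the definition of $\mathscr M_{T',S}$ together with the truth clauses. Throughout, the one thing to stay careful about is which object each occurrence of $\models$ refers to ($\mathscr M_{T',S}$, versus entailment from $T'$).

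For items (1) and (2), let $\{T_i\}_{i\in I}$ be a family of generic (resp.\ closed generic) theories and put $T=\bigcup_{i\in I}T_i$. Given a set $S$ of propositional atoms and a theory (resp.\ closed theory) $T'\supseteq T$, I would observe that $T'\supseteq T_i$ for every $i$, so genericity (resp.\ closed genericity) of each $T_i$ yields $\mathscr M_{T',S}\models T_i$; since every formula of $T$ lies in some $T_i$, it follows that $\mathscr M_{T',S}\models T$, as required. Item (3) is immediate: the condition defining closed genericity quantifies only over closed theories $T'\supseteq T$, a subclass of the theories $T'\supseteq T$ appearing in the definition of genericity, so a generic theory satisfies the closed-generic condition a fortiori.

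For item (4), fix $S$ and a theory $T'\supseteq\mathbf V$; a formula of $\mathbf V$ has the form $\mathrm K\varphi$ with $\varphi$ valid. By clause (ii) of the definition of $\mathscr M_{T',S}$, we have $\mathscr M_{T',S}\models\mathrm K\varphi$ iff $T'\models\varphi$, and $T'\models\varphi$ holds because $\models\varphi$ (every model whatsoever, hence every model of $T'$, satisfies $\varphi$). For item (5), fix $S$ and a theory $T'\supseteq\mathbf K$, and consider an arbitrary instance $\mathrm K(\varphi\rightarrow\psi)\rightarrow(\mathrm K\varphi\rightarrow\mathrm K\psi)$. By the truth clause for $\rightarrow$ it suffices to show that if $\mathscr M_{T',S}\models\mathrm K(\varphi\rightarrow\psi)$ and $\mathscr M_{T',S}\models\mathrm K\varphi$ then $\mathscr M_{T',S}\models\mathrm K\psi$; translating each conjunct via clause (ii), this reduces to: if $T'\models\varphi\rightarrow\psi$ and $T'\models\varphi$ then $T'\models\psi$, which is a one-line check from the truth clause for $\rightarrow$ (modus ponens at the level of entailment).

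I do not anticipate a genuine obstacle: items (1)--(5) are all routine once the definitions are unpacked. The only mild subtleties are (a) in items (1)--(2), the union $T$ need not equal any single $T_i$, so ``$\mathscr M_{T',S}\models T$'' must be verified formula-by-formula; and (b) in items (4)--(5), noting that the hypotheses $T'\supseteq\mathbf V$ and $T'\supseteq\mathbf K$ are not actually needed — the models $\mathscr M_{T',S}$ satisfy $\mathbf V$ and $\mathbf K$ for \emph{every} choice of $T'$, because validity implies entailment from any theory and entailment is closed under modus ponens.
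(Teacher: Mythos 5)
Your proof is correct and follows essentially the same route as the paper: items (1)--(3) by direct unwinding of the quantifiers in the definition (which the paper dismisses as ``readily verified''), and items (4)--(5) by translating $\mathscr M_{T',S}\models\mathrm K\varphi$ into $T'\models\varphi$ and using validity, respectively modus ponens at the level of entailment. Your added observation that the hypotheses $T'\supseteq\mathbf V$ and $T'\supseteq\mathbf K$ are not actually used in (4)--(5) is accurate and consistent with the paper's own argument.
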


\begin{proof}
    Properties (1)--(3) are readily verified.
\begin{itemize}
    \item[(4)] Let $S$ be a set of propositional atoms and let $T'\supseteq \mathbf V$.
        Let $\varphi\in \mathbf V$, we must show $\mathscr M_{T',S}\models \varphi$.
        By definition of $\mathbf V$, $\varphi$ is $\mathrm{K}\psi$ for some valid $\psi$.
        Since $\psi$ is valid, $T'\models\psi$.
        Thus $\mathscr M_{T',S}\models \mathrm{K}\psi$, as desired.

   \item[(5)] Let $S$ be a set of propositional atoms and let $T'\supseteq \mathbf K$.
        Let $\varphi\in\mathbf K$, we must show $\mathscr M_{T',S}\models\varphi$.
        By definition of $\mathbf K$, $\varphi$ is
        $\mathrm{K}(\psi\rightarrow\rho)\rightarrow (\mathrm{K}\psi\rightarrow \mathrm{K}\rho)$ for some
        $\psi$ and $\rho$.
        Assume $\mathscr M_{T',S}\models \mathrm{K}(\psi\rightarrow\rho)$
        and $\mathscr M_{T',S}\models \mathrm{K}\psi$.
        This means $T'\models \psi\rightarrow\rho$ and $T'\models \psi$.
        By modus ponens, $T'\models \rho$. So $\mathscr M_{T',S}\models \mathrm{K}\rho$,
        as desired.\qedhere
        \end{itemize}
\end{proof}

\begin{lemma}
\label{kkclosedgeneric}
    The theory $\mathbf V\cup\mathbf K\cup\mathbf{KK}$ is closed generic.\qedthm
\end{lemma}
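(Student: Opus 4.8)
The plan is to check the definition of closed genericity head-on, treating the three conjuncts $\mathbf V$, $\mathbf K$, and $\mathbf{KK}$ separately and reusing the work already recorded in Proposition~\ref{basicfacts} for the first two. So I would fix a set $S$ of propositional atoms and a \emph{closed} theory $T'$ with $T'\supseteq\mathbf V\cup\mathbf K\cup\mathbf{KK}$, and show $\mathscr M_{T',S}\models\varphi$ for every $\varphi$ in the union. For $\varphi\in\mathbf V$ this is immediate from the genericity of $\mathbf V$ (Proposition~\ref{basicfacts}(4)) applied to $T'\supseteq\mathbf V$, and likewise for $\varphi\in\mathbf K$ from Proposition~\ref{basicfacts}(5); one could equivalently first note via parts (1) and (3) that $\mathbf V\cup\mathbf K$ is already closed generic.

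The only real content is the case $\varphi\in\mathbf{KK}$, where $\varphi$ has the form $\mathrm{K}\psi\rightarrow\mathrm{K}\mathrm{K}\psi$. Here I would unfold the definition of $\mathscr M_{T',S}$: the antecedent $\mathscr M_{T',S}\models\mathrm{K}\psi$ says exactly that $T'\models\psi$, and the consequent $\mathscr M_{T',S}\models\mathrm{K}\mathrm{K}\psi$ says exactly that $T'\models\mathrm{K}\psi$. Thus what has to be verified is precisely the implication ``$T'\models\psi$ implies $T'\models\mathrm{K}\psi$'', and this is Simulated Necessitation (Lemma~\ref{necessitation}): its hypotheses are met because $T'$ is closed and contains both $\mathbf V$ and $\mathbf K$. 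Assuming the antecedent, Lemma~\ref{necessitation} delivers $T'\models\mathrm{K}\psi$, hence $\mathscr M_{T',S}\models\mathrm{K}\mathrm{K}\psi$, so $\mathscr M_{T',S}\models\varphi$; and if the antecedent fails then $\mathscr M_{T',S}\models\varphi$ trivially.

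The step I expect to be the crux is not a calculation but a recognition: seeing that the reduction of the $\mathbf{KK}$ axioms to necessitation genuinely uses the closedness of $T'$ (Lemma~\ref{necessitation} fails without it), which is exactly why the statement speaks of \emph{closed} genericity rather than plain genericity — indeed one should not expect $\mathbf V\cup\mathbf K\cup\mathbf{KK}$ to be generic. Once that is in hand, everything else is a routine unwinding of the definitions of $\mathscr M_{T',S}$ and of (closed) genericity.
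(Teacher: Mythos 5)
Your proposal is correct and follows essentially the same route as the paper's own proof: handle $\mathbf V$ and $\mathbf K$ by appeal to Proposition~\ref{basicfacts}, and reduce each instance $\mathrm{K}\psi\rightarrow\mathrm{K}\mathrm{K}\psi$ of $\mathbf{KK}$ to the implication ``$T'\models\psi$ implies $T'\models\mathrm{K}\psi$,'' discharged by Lemma~\ref{necessitation} using the closedness of $T'$. Your closing remark correctly identifies why closedness is essential and why plain genericity should not be expected, which matches Theorem~\ref{kknegative}.
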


\begin{proof}
    Let $T=\mathbf V\cup\mathbf K\cup\mathbf{KK}$.
    Let $S$ be a set of propositional atoms and let $T'\supseteq T$ be closed.
    Let $\varphi\in T$, we must show $\mathscr M_{T',S}\models\varphi$. Consider two cases:
\begin{itemize}[leftmargin=5em]
    \item[Case 1] $\varphi\in \mathbf V\cup\mathbf K$. Then $\mathscr M_{T',S}\models\varphi$ because
        $\mathbf V\cup\mathbf K$ is generic by Proposition \ref{basicfacts}, parts (1), (4), and (5).

    \item[Case 2] $\varphi\in \mathbf{KK}$. Then $\varphi$ is $\mathrm{K}\psi\rightarrow \mathrm{K}\mathrm{K}\psi$ for some $\psi$.
        Assume $\mathscr M_{T',S}\models \mathrm{K}\psi$.
        This means $T'\models \psi$.
        Since $T'$ contains $\mathbf V$ and $\mathbf K$ and is closed,
        we may simulate necessitation: Lemma \ref{necessitation}
        implies $T'\models \mathrm{K}\psi$.
        Thus $\mathscr M_{T',S}\models \mathrm{K}\mathrm{K}\psi$, as desired.\qedhere
        \end{itemize}
\end{proof}

\begin{lemma}
\label{closurelemma} Let $T_{0}$ be a theory, and let $T$ be the smallest closed theory including theory $T_0$.  Suppose theory $T_0$ is (closed) generic. Then $T$ is (closed) generic. \qedthm
\end{lemma}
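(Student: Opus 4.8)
The plan is to argue directly from the definition of (closed) genericity, handling the two cases in parallel. Let $T_0$ be (closed) generic, and let $T$ be the smallest closed theory including $T_0$. Fix a set $S$ of propositional atoms and a theory $T' \supseteq T$ (a \emph{closed} theory $T' \supseteq T$ in the closed-generic case). We must show $\mathscr M_{T',S} \models T$. Since $T' \supseteq T \supseteq T_0$ and $T_0$ is (closed) generic --- and in the closed case $T'$ is closed as required --- we immediately get $\mathscr M_{T',S} \models T_0$. The work is to promote this from $T_0$ to all of $T$.

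The key observation is an explicit description of $T$: since $T$ is the smallest closed theory containing $T_0$, we have $T = \bigcup_{n \ge 0} T_n$, where $T_{n+1} = T_n \cup \{\mathrm{K}\varphi : \varphi \in T_n\}$. So it suffices to show by induction on $n$ that $\mathscr M_{T',S} \models T_n$. The base case $n = 0$ is exactly $\mathscr M_{T',S} \models T_0$, established above. For the inductive step, suppose $\mathscr M_{T',S} \models T_n$ and let $\psi \in T_{n+1}$; if $\psi \in T_n$ we are done by hypothesis, so assume $\psi = \mathrm{K}\varphi$ with $\varphi \in T_n$. By definition of $\mathscr M_{T',S}$, we have $\mathscr M_{T',S} \models \mathrm{K}\varphi$ iff $T' \models \varphi$; so it is enough to show $T' \models \varphi$. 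But $\varphi \in T_n \subseteq T \subseteq T'$, so $T'$ contains $\varphi$ outright and hence $T' \models \varphi$. This closes the induction, and since every element of $T$ lies in some $T_n$, we conclude $\mathscr M_{T',S} \models T$.

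I expect the only subtlety --- hardly an obstacle --- to be making sure the quantifier structure in the definition of genericity is used correctly: we need the genericity of $T_0$ applied to the \emph{same} $S$ and the \emph{same} $T'$ we are currently considering, and in the closed-generic case we must check that this $T'$ is indeed closed, which holds since $T' \supseteq T$ and $T'$ was assumed closed in that case. Everything else is the routine inductive unwinding of the closure operator together with a single application of clause (ii) of the definition of $\mathscr M_{T',S}$, using the trivial fact that $\varphi \in T'$ entails $T' \models \varphi$. The fact that both the generic and closed-generic versions go through by the identical argument is why the lemma can be stated with the parenthetical ``(closed)''.
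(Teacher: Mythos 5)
Your proof is correct and essentially the same as the paper's: both reduce to the observation that any element of $T\setminus T_0$ has the form $\mathrm{K}\psi$ for some $\psi\in T\subseteq T'$, whence $T'\models\psi$ and clause (ii) of the definition of $\mathscr M_{T',S}$ gives $\mathscr M_{T',S}\models\mathrm{K}\psi$. Your stratification of $T$ into levels $T_n$ and the induction on $n$ are harmless but superfluous --- the inductive hypothesis $\mathscr M_{T',S}\models T_n$ is never actually used in the inductive step, which only needs $\varphi\in T'$, and the paper accordingly argues by a direct two-case analysis with no induction.
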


\begin{proof}
    Let $S$ be a set of propositional atoms and let $T'$ be a theory (resp.\ closed theory)
    such that $T'\supseteq T$.
    Let $\varphi\in T$, we must show $\mathscr M_{T',S}\models \varphi$. Consider two cases:
\begin{itemize}[leftmargin=5em]
    \item[Case 1] $\varphi\in T_0$. Then $\mathscr M_{T',S}\models\varphi$ because $T_0$ is
    generic (resp.\ closed generic).

\item[Case 2] $\varphi\not\in T_0$.
    The only other way for $\varphi$ to be in $T$ (besides being in $T_0$) is
    by way of the closure of $T$. So $\varphi$ is $K\psi$ for some $\psi\in T$.
    Since $T'\supseteq T$ and $\psi\in T$, we have $T'\models \psi$,
    which means $\mathscr M_{T',S}\models \mathrm{K}\psi$, as desired. \qedhere
    \end{itemize}
\end{proof}
\vspace*{1.7em}
\begin{lemma}
\label{normalkripkeworkerlemma}
    Suppose $T_0$ is a generic (resp.\ closed generic) theory.
    Let $T=\{\varphi\,:\,T_0\models\varphi\}$.
    Then $T$
    is generic (resp.\ closed generic).\qedthm
\end{lemma}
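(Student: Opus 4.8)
The plan is to observe that $T$ is wedged between $T_0$ and any of its own extensions, so that the genericity of $T_0$ transfers essentially for free. First I would record the containment $T_0\subseteq T$: every $\varphi\in T_0$ trivially satisfies $T_0\models\varphi$, and hence lies in $T$ by the definition of $T$. Consequently, any theory $T'$ with $T'\supseteq T$ automatically satisfies $T'\supseteq T_0$ as well.

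The core of the argument would then run as follows. Fix a set $S$ of propositional atoms and a theory (resp.\ a closed theory) $T'$ with $T'\supseteq T$; the goal is $\mathscr M_{T',S}\models T$. Since $T'\supseteq T_0$ and $T'$ is a theory (resp.\ a closed theory), the hypothesis that $T_0$ is generic (resp.\ closed generic) yields $\mathscr M_{T',S}\models T_0$ directly. Now take an arbitrary $\varphi\in T$; by the definition of $T$ we have $T_0\models\varphi$, and since $\mathscr M_{T',S}\models T_0$, the definition of entailment forces $\mathscr M_{T',S}\models\varphi$. As $\varphi\in T$ was arbitrary, $\mathscr M_{T',S}\models T$, and as $S$ and $T'$ were arbitrary, $T$ is generic (resp.\ closed generic).

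I do not expect a genuine obstacle here: once the containment $T_0\subseteq T$ is in hand, the statement collapses to the definitions of genericity and of entailment. The only point demanding a little care is keeping the two parenthetical alternatives synchronized---in the closed case one must apply \emph{closed} genericity of $T_0$ to the \emph{closed} theory $T'$---and noting that no closure property of $T$ itself is ever invoked (indeed $T$, being merely the set of semantic consequences of $T_0$, need not be closed under $\mathrm{K}$).
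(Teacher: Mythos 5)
Your proof is correct, but it takes a genuinely different and more direct route than the paper's. The paper proves the lemma by a syntactic induction: it assigns to each $\varphi$ with $T_0\models\varphi$ a minimal length $N(\varphi)$ of a derivation from $T_0$ by modus ponens, and then shows by induction on $N(\varphi)$ that $\mathscr M_{T',S}\models\varphi$, using genericity of $T_0$ in the base case and preservation under modus ponens in the inductive step. You instead observe that $T_0\subseteq T$, hence $T'\supseteq T_0$, hence $\mathscr M_{T',S}\models T_0$ by the hypothesis on $T_0$; and since entailment in this paper is defined \emph{semantically} (Definition \ref{validitydefn}: $T_0\models\varphi$ iff every model of $T_0$ satisfies $\varphi$) and $\mathscr M_{T',S}$ is a bona fide model, $\mathscr M_{T',S}\models\varphi$ follows for every $\varphi\in T$ by nothing more than instantiating that definition at the model $\mathscr M_{T',S}$. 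This collapses the lemma to the definitions and avoids the deduction-theorem machinery entirely. What the paper's derivation-length induction would buy is robustness to a proof-theoretic reading of $\models$ (where one would genuinely need to track how consequences are generated); under the paper's actual semantic definition your argument is not only sufficient but arguably cleaner, since the paper's claim that every semantic consequence of $T_0$ arises from members of $T_0$ by modus ponens alone is itself a nontrivial completeness-style assertion (it is not even literally true for tautologies when $T_0$ is, say, empty), whereas your argument needs no such claim. Your closing remarks are also on point: the two parenthetical alternatives stay synchronized because the only place the closed/unclosed distinction enters is in which $T'$ one is allowed to quantify over, and no closure property of $T$ itself is used.
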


\begin{proof}
    Let $S$ be an arbitrary set of propositional atoms, and let $T'$ be a theory (resp.\ closed theory) such that $T'\supseteq T$. We establish that
    $\mathscr M_{T',S}\models T$.

    For each formula $\varphi$ such that $T\models\varphi$,
    let $N(\varphi)$ be the smallest positive integer $n$ for which
    there is a sequence $\varphi_1,\ldots,\varphi_n$,
    with $\varphi_n=\varphi$,
    such that for each $i=1,\ldots,n$, either $\varphi_i\in T_0$
    or there exist $j,k<i$ such that
    $\varphi_k$ is $\varphi_j\rightarrow\varphi_i$.
    Such an $N(\varphi)$ exists by the deduction theorem.

    We prove by induction on $N(\varphi)$ that
    for every $\varphi$ such that $T_0\models\varphi$,
    $\mathscr M_{T',S}\models \varphi$.
    \vspace*{.7em}
\begin{itemize}[leftmargin=7em]
    \item[Basis Step]
 $N(\varphi)=1$
    can clearly only hold if $\varphi\in T_0$.
    In that case, $\mathscr M_{T',S}\models\varphi$
    because $T_0$ is generic (resp.\ closed generic).

    \item[Inductive Step]
    $N(\varphi)>1$.
    If $\varphi\in T_0$, we are done as in the Base Case, but assume not.
    Let $\varphi_1,\ldots,\varphi_n$ be a sequence
    of length $n=N(\varphi)$ with the above
    properties.
    
    For each $i<n$,
    the subsequence $\varphi_1,\ldots,\varphi_i$
    is a shorter sequence (with the above properties) for $\varphi_i$, showing $N(\varphi_i)<N(\varphi)$. Thus by induction, ($*$) for each $i<n$,
    $\mathscr M_{T',S}\models\varphi_i$.
    Since $\varphi\not\in T_0$, there must be
    $j,k<n$ such that $\varphi_k$ is $\varphi_j\rightarrow\varphi_n$.
    By $*$, $\mathscr M_{T',S}\models \varphi_j$
    and $\mathscr M_{T',S}\models \varphi_k$.
    So $\mathscr M_{T',S}\models \varphi_j\rightarrow\varphi_n$. By modus ponens, $\mathscr M_{T',S}\models\varphi_n$, as desired.
    \end{itemize}
\end{proof}

We conclude this section with a result throwing light on the relationship between generic theories and  normal modal logics. The proof is immediate by
combining Lemmas \ref{basicfacts}, \ref{closurelemma},
and \ref{normalkripkeworkerlemma}.
\vspace*{1em}
\begin{theorem}
    Suppose $T_0$ is a (closed) generic theory.
    Let $T$ be the \emph{normal Kripke closure} of $T_0$, i.e., the smallest closed theory containing $T_0$, $\mathbf{V}$, $\mathbf{K}$, and with the property that $T$ contains $\phi$ whenever $T\models\phi$. Then $T$ is (closed) generic.\qedthm
\end{theorem}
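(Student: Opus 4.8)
The plan is to build the normal Kripke closure $T$ of $T_0$ incrementally and apply the three cited lemmas in sequence, checking at each stage that (closed) genericity is preserved. First I would observe that the operation ``add $\mathbf{V}$ and $\mathbf{K}$'' is harmless: since $T_0$ is (closed) generic and $\mathbf{V}\cup\mathbf{K}$ is generic by Proposition~\ref{basicfacts} parts (1), (4), (5), their union $T_1 := T_0\cup\mathbf{V}\cup\mathbf{K}$ is (closed) generic by Proposition~\ref{basicfacts} parts (1) and (2). Next I would take $T_2$ to be the smallest closed theory including $T_1$; by Lemma~\ref{closurelemma}, $T_2$ is (closed) generic. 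Finally I would take $T_3 := \{\varphi : T_2\models\varphi\}$, which is (closed) generic by Lemma~\ref{normalkripkeworkerlemma}.

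The remaining—and only substantive—step is to argue that $T_3$ actually equals the normal Kripke closure $T$ as defined in the statement, rather than merely containing $T_0$, $\mathbf{V}$, $\mathbf{K}$ and being deductively and $\mathrm{K}$-closed. I would argue this by a minimality/fixed-point comparison. On one hand, $T_3$ contains $T_0$, $\mathbf{V}$, $\mathbf{K}$ (each is a subset of $T_2$, hence entailed by $T_2$); $T_3$ is deductively closed by construction; and $T_3$ is closed under $\mathrm{K}$ because $T_2$ is a closed theory, so if $T_3\models\varphi$ then (since $T_3$'s theorems coincide with $T_2$'s — one needs $T_2$ deductively closed here, which follows from Lemma~\ref{compactness} applied to the already-closed-under-$\mathrm{K}$ theory $T_2$, or more simply because $T_2\subseteq T_3$ and $T_3\models\varphi$ gives $T_2\models\varphi$, whence $\mathrm{K}\varphi\in T_2\subseteq T_3$). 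Hence $T_3$ is a closed theory containing $T_0,\mathbf{V},\mathbf{K}$ with the stated closure-under-entailment property, so by minimality of $T$ we get $T\subseteq T_3$. For the reverse inclusion, I would show $T_3\subseteq T$: every element of $T_2$ lies in $T$ (the closure operations defining $T$ subsume those defining $T_2$, via an induction on how a formula enters $T_2$), and then every $\varphi$ with $T_2\models\varphi$ satisfies $T\models\varphi$ (as $T\supseteq T_2$), hence $\varphi\in T$ by the entailment-closure of $T$. Therefore $T = T_3$, and $T$ is (closed) generic.

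The step I expect to be the main obstacle is the bookkeeping in the equality $T=T_3$: one must be careful that the ``smallest closed theory with closure under entailment'' can be reached by the particular two-stage process (close under $\mathrm{K}$, then close under entailment) rather than needing a transfinite interleaving. The point that makes the two stages suffice is that closing a $\mathrm{K}$-closed theory under classical entailment keeps it $\mathrm{K}$-closed — because, as noted, $T_2\subseteq T_3$ forces any theorem of $T_3$ to be a theorem of $T_2$ and hence already $\mathrm{K}$-prefixed inside $T_2$. Once that observation is in hand, the fixed-point comparison closes cleanly and the genericity transfer is entirely delegated to Lemmas~\ref{basicfacts}, \ref{closurelemma}, and \ref{normalkripkeworkerlemma}, exactly as the remark preceding the theorem promises.
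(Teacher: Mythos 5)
Your decomposition is exactly the paper's intended argument --- the paper offers no details beyond ``immediate by combining Lemmas \ref{basicfacts}, \ref{closurelemma}, and \ref{normalkripkeworkerlemma},'' and you apply them in the same order, with the added (and worthwhile) bookkeeping that the two-stage construction really yields the normal Kripke closure. One justification in that bookkeeping is off, though the conclusion it supports is true: to show $T_3$ is closed under $\mathrm{K}$ you write ``$T_3\models\varphi$ gives $T_2\models\varphi$, whence $\mathrm{K}\varphi\in T_2$,'' but $T_2\models\varphi$ does not put $\varphi$ into $T_2$, so the $\mathrm{K}$-closure of $T_2$ as a set gives you nothing here (and $T_2$ is not deductively closed, so the appeal to Lemma \ref{compactness} does not rescue this). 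The correct route is Lemma \ref{necessitation} (Simulated Necessitation): $T_2$ is closed and contains $\mathbf{V}$ and $\mathbf{K}$, so $T_2\models\varphi$ implies $T_2\models\mathrm{K}\varphi$, hence $\mathrm{K}\varphi\in T_3$ --- which is precisely why $\mathbf{V}$ and $\mathbf{K}$ are built into the definition of normal Kripke closure. With that repair the fixed-point comparison $T=T_3$ goes through and the rest of your argument is fine.
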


\vspace*{1.7em}

\section{Two Generalized Consistency Statements}
\label{twogeneralizedconsistencystatementssection}

In what follows, we  state two theorems, each generalizing
Theorem \ref{specialcase}. One might be curious whether adding $\mathbf{KK}$
to the statement of Theorem \ref{specialcase} would make the paradox reappear.
Certainly the paradox as formulated in Theorem \ref{knowerparadox} does not use
$\mathbf{KK}$ in its proof. But what if there is some other form of the Knower's
Paradox that makes use of $\mathbf{KK}$, and what if in fact we only managed to
achieve consistency because we neglected to include $\mathbf{KK}$ among the
background axioms? We could state a separate version of Theorem \ref{specialcase}
which includes $\mathbf{KK}$ and then prove that separate version, with a proof
that is extremely similar to a proof of Theorem \ref{specialcase} itself, but
then maybe there's still some further background axiom that we are still neglecting,
and we would then have to state and prove yet a third version of the theorem.
This process might go on forever, we might never exhaustively think of all the
different background axioms that critics might insist upon.

\begin{restatable}{theorem}{firstmaintheorem}
\label{firstmaintheorem} Let $p$ be a propositional atom, and let $H$ be a generic theory. Let $(T_{KP})_0$ be the smallest closed theory containing:
    \begin{itemize}
        \item[\textup{(i)}] $H$
        \item[\textup{(ii)}] $p\leftrightarrow \mathrm{K}\neg p$ \hspace{10em} ``This sentence is known to be false''
    \end{itemize}
    In addition, let $T_{KP}$ be the theory containing:
        \begin{itemize}
        \item[\textup{(a)}] $(T_{KP})_0;\textup{ and}$
         \item[\textup{(b)}] $\mathbf T$.
    \end{itemize}
    For any set $S$ of propositional atoms, if $p\not\in S$ then
    $\mathscr M_{(T_{KP})_0,S}\models T_{KP}$. In particular,
    $T_{KP}$ is consistent.\qedthm
\end{restatable}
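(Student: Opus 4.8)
Write $T_0$ for $(T_{KP})_0$. The plan is to verify directly that the model $\mathscr M_{T_0,S}$ satisfies $T_{KP}=T_0\cup\mathbf T$ whenever $p\notin S$. First I would record the shape of $T_0$: since $T_0$ is the smallest closed theory containing $H\cup\{p\leftrightarrow\mathrm{K}\neg p\}$, it is exactly $\{\mathrm{K}^n\chi:\chi\in H,\ n\geq 0\}\cup\{\mathrm{K}^n(p\leftrightarrow\mathrm{K}\neg p):n\geq 0\}$, where $\mathrm{K}^0\psi=\psi$ and $\mathrm{K}^{n+1}\psi=\mathrm{K}(\mathrm{K}^n\psi)$. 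The point of this description is that every member of $T_0$ is either a member of $H$, or the axiom $p\leftrightarrow\mathrm{K}\neg p$ itself, or of the form $\mathrm{K}\theta$ for some $\theta\in T_0$; consequently, for any theory $T'\supseteq T_0$, every such $\mathrm{K}\theta$ is automatically true in $\mathscr M_{T',S'}$, because $\theta\in T'$ yields $T'\models\theta$.

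I would then isolate one observation and prove it a single time: for every theory $T'$ with $T'\supseteq T_0$ and every set $S'$ of propositional atoms such that $p\in S'$ if and only if $T'\models\neg p$, one has $\mathscr M_{T',S'}\models T_0$. The proof is the case split over the members of $T_0$ above: members of $H$ are true in $\mathscr M_{T',S'}$ because $H$ is generic and $T'\supseteq H$; members of the form $\mathrm{K}\theta$ with $\theta\in T_0$ are true because $T'\models\theta$; and $p\leftrightarrow\mathrm{K}\neg p$ is true because $\mathscr M_{T',S'}\models p$ iff $p\in S'$, $\mathscr M_{T',S'}\models\mathrm{K}\neg p$ iff $T'\models\neg p$, and the two right-hand conditions agree by hypothesis.

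The main obstacle is discharging the hypothesis of that observation for the model we actually care about, i.e.\ proving $T_0\not\models\neg p$. This is the one place where the structure of the theorem really bites: $\mathbf T$ sits only in $T_{KP}$, not in $T_0=(T_{KP})_0$, so the Knower reasoning ``$\mathrm{K}\neg p\rightarrow\neg p$, hence $\neg p$, hence (by closure of $T_0$) $\mathrm{K}\neg p$, hence $p$'' cannot be run inside $T_0$. To establish $T_0\not\models\neg p$ I would apply the observation with $T'=T_0\cup\{\neg p\}$ (still a theory, and still $\supseteq T_0\supseteq H$) and with $S'$ any set of atoms containing $p$: then ``$p\in S'$'' and ``$T'\models\neg p$'' both hold, so the hypothesis is met, so $\mathscr M_{T',S'}\models T_0$; since also $\mathscr M_{T',S'}\models p$, this exhibits a model of $T_0$ in which $p$ is true, i.e.\ $T_0\not\models\neg p$.

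Finally I would apply the observation a second time with $T'=T_0$ and $S'=S$: as $p\notin S$ and $T_0\not\models\neg p$, both ``$p\in S$'' and ``$T_0\models\neg p$'' are false, so the hypothesis holds and $\mathscr M_{T_0,S}\models T_0$. It remains to check $\mathscr M_{T_0,S}\models\mathbf T$: for an instance $\mathrm{K}\psi\rightarrow\psi$, if $\mathscr M_{T_0,S}\models\mathrm{K}\psi$ then $T_0\models\psi$ by the definition of $\mathscr M_{T_0,S}$, whence $\mathscr M_{T_0,S}\models\psi$ since $\mathscr M_{T_0,S}\models T_0$; so the conditional holds in either case. Hence $\mathscr M_{T_0,S}\models T_{KP}$, and taking e.g.\ $S=\emptyset$ shows in particular that $T_{KP}$ is consistent.
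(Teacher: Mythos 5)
Your proof is correct and follows essentially the same route as the paper's: verify $\mathscr M_{(T_{KP})_0,S}\models T_{KP}$ case by case over the members of $T_{KP}$, with the crux being $(T_{KP})_0\not\models\neg p$, witnessed by an auxiliary model of the form $\mathscr M_{T',S'}$ with $p\in S'$. The only (immaterial) differences are your choice of auxiliary theory $T'=(T_{KP})_0\cup\{\neg p\}$ where the paper uses the inconsistent theory of \emph{all} formulas, and your packaging of both verifications into a single observation with the hypothesis ``$p\in S'$ iff $T'\models\neg p$,'' which is a tidy consolidation of what the paper does twice.
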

We prove Theorem \ref{firstmaintheorem} in Appendix \ref{appendix:proofs}.  Observe that since theory $\mathbf V\cup\mathbf K$ is generic by Proposition \ref{basicfacts}, Theorem \ref{specialcase} is a special case of Theorem \ref{firstmaintheorem}.

Now modify Theorem \ref{specialcase} by replacing $\mathbf V\cup\mathbf K$ with
$\mathbf V\cup\mathbf K\cup\mathbf{KK}$.
We could not do that using Theorem \ref{firstmaintheorem} unless we first established
that $\mathbf V\cup\mathbf K\cup\mathbf{KK}$ was generic (in fact, in the next section, we will show that $\mathbf V\cup\mathbf K\cup\mathbf{KK}$ is \emph{not} generic). We do know that
$\mathbf V\cup\mathbf K\cup\mathbf{KK}$ is closed generic (Lemma \ref{kkclosedgeneric}), so we would be done if we had a version of Theorem \ref{firstmaintheorem} involving closed generic theories.

\begin{theorem}
\label{secondmaintheorem}
    Same as Theorem \ref{firstmaintheorem} but with ``generic'' replaced by
    ``closed generic.''\qedthm
\end{theorem}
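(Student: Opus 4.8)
The plan is to rerun the proof of Theorem~\ref{firstmaintheorem} with essentially no change, the point being that that proof never uses the full strength of $H$ being generic. Everywhere the hypothesis on $H$ is invoked, it is invoked only in the shape ``$\mathscr M_{(T_{KP})_0,S'}\models H$'' for suitable sets $S'$ of propositional atoms, and since $(T_{KP})_0$ is by construction a \emph{closed} theory with $(T_{KP})_0\supseteq H$, this conclusion is already delivered by $H$ being closed generic. So my first task is to go back through the argument for Theorem~\ref{firstmaintheorem} and confirm that each appeal to genericity of $H$ is in fact an appeal to a closed extension of $H$; once that is checked, ``generic'' may be weakened to ``closed generic'' and the proof stands verbatim.

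For concreteness, here is the shape I expect that argument to take. One first records the routine structural fact that, being the smallest closed theory over $H\cup\{p\leftrightarrow\mathrm{K}\neg p\}$, the theory $(T_{KP})_0$ consists exactly of the members of $H$, the single formula $p\leftrightarrow\mathrm{K}\neg p$, and formulas $\mathrm{K}\psi$ with $\psi\in(T_{KP})_0$. Consequently, for \emph{any} set $S'$ of atoms the model $\mathscr M_{(T_{KP})_0,S'}$ already satisfies every member of $(T_{KP})_0$ other than possibly $p\leftrightarrow\mathrm{K}\neg p$: it satisfies $H$ by closed genericity of $H$, and it satisfies each $\mathrm{K}\psi$ with $\psi\in(T_{KP})_0$ straight from the defining clause of $\mathscr M_{(T_{KP})_0,S'}$ together with $(T_{KP})_0\models\psi$. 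The one substantive step is then $(T_{KP})_0\not\models\neg p$, which I would argue by contradiction: assuming $(T_{KP})_0\models\neg p$, fix any $S'$ with $p\in S'$ and note that $\mathscr M_{(T_{KP})_0,S'}$ makes both $p$ and $\mathrm{K}\neg p$ true, hence satisfies $p\leftrightarrow\mathrm{K}\neg p$ as well, hence satisfies all of $(T_{KP})_0$ while making $p$ true --- contradicting $(T_{KP})_0\models\neg p$. Given this, take $S$ with $p\notin S$ and set $\mathscr M:=\mathscr M_{(T_{KP})_0,S}$; then $\mathscr M$ makes both $p$ and $\mathrm{K}\neg p$ false, so it satisfies $p\leftrightarrow\mathrm{K}\neg p$ and therefore all of $(T_{KP})_0$, and it satisfies $\mathbf T$ because $\mathscr M\models\mathrm{K}\varphi$ forces $(T_{KP})_0\models\varphi$ and hence $\mathscr M\models\varphi$. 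Thus $\mathscr M\models T_{KP}$, and in particular $T_{KP}$ is consistent.

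The step I expect to be the main obstacle is the claim $(T_{KP})_0\not\models\neg p$. It has a faintly circular look --- the assumed entailment is fed straight back in to make the right-hand side of the biconditional true --- though it is a perfectly legitimate proof by contradiction; and it is also the one place where one must check that the extension of $H$ to which genericity is applied is genuinely closed, so that closed genericity, rather than plain genericity, really does suffice. It is worth remarking that, in contrast with the Knower Paradox of Theorem~\ref{knowerparadox}, this argument nowhere invokes simulated necessitation (Lemma~\ref{necessitation}), so $H$ need not contain $\mathbf V\cup\mathbf K$.
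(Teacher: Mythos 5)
Your proposal is correct, and its overall strategy is exactly the paper's: the published proof of this theorem is literally ``rerun the proof of Theorem~\ref{firstmaintheorem}, checking that genericity of $H$ is only ever applied to closed supersets of $H$.'' You differ from the paper in one sub-step, namely how $(T_{KP})_0\not\models\neg p$ is established. The paper exhibits an explicit countermodel $\mathscr M_{T_\infty,S'}$, where $T_\infty$ is the set of \emph{all} formulas and $p\in S'$; since $T_\infty$ is trivially closed and contains $H$, closed genericity applies, and $T_\infty\models\neg p$ vacuously makes $\mathrm{K}\neg p$ true, so the biconditional is satisfied and the model witnesses $(T_{KP})_0\cup\{p\}$ directly. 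You instead argue by contradiction using $\mathscr M_{(T_{KP})_0,S'}$ itself, feeding the assumed entailment $(T_{KP})_0\models\neg p$ back into the $\mathrm{K}\neg p$ clause; as you note, this is a legitimate (non-circular) reductio, and it relies on $(T_{KP})_0$ being closed, which it is by construction. Your version is slightly more economical in that it needs only one auxiliary model rather than two, while the paper's $T_\infty$ trick has the virtue of producing a concrete model and avoiding the reductio; both correctly isolate the point that every theory to which the hypothesis on $H$ is applied is closed, which is all that distinguishes this theorem from Theorem~\ref{firstmaintheorem}. Your closing observation that simulated necessitation is never needed, so $H$ need not contain $\mathbf V\cup\mathbf K$, is also accurate and consistent with the paper's statement.
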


A proof similar to the one for Theorem \ref{firstmaintheorem} establishes Theorem \ref{secondmaintheorem}.

\section{Negative Results about Genericness}
\label{negativeresultssecn}

We have established theory $\mathbf V\cup\mathbf K\cup \mathbf{KK}$ to be closed generic.
Are these results preserved if one or more of the arguments to the union is dropped?
For example,
is theory $\mathbf V\cup\mathbf{KK}$ closed generic? Or the theory $\mathbf K\cup\mathbf{KK}$?
What about the theory $\mathbf{KK}$  alone?
Similarly, can we strengthen  closed genericity of
$\mathbf V\cup\mathbf K\cup \mathbf{KK}$ to full genericity? We show each of these questions has one and the same answer: No. 

\begin{theorem}
\label{kknegative}
   The theory  $\mathbf V\cup\mathbf K\cup\mathbf{KK}$ fails to be generic.\qedthm
\end{theorem}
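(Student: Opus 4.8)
The plan is to exploit the gap that closedness fills in the proof of Lemma~\ref{kkclosedgeneric}: there the $\mathbf{KK}$ instances survived in $\mathscr{M}_{T',S}$ only because $T'$ was assumed closed, so that $T'\models\psi$ yielded $T'\models\mathrm{K}\psi$ by simulated necessitation (Lemma~\ref{necessitation}). Without closedness this inference fails, and I will turn that failure into an explicit counterexample. Fix a propositional atom $p$ and set $T'=\mathbf{V}\cup\mathbf{K}\cup\mathbf{KK}\cup\{p\}$. Then $T'\supseteq\mathbf{V}\cup\mathbf{K}\cup\mathbf{KK}$ and trivially $T'\models p$, so for every set $S$ of atoms we have $\mathscr{M}_{T',S}\models\mathrm{K}p$. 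If I also establish $T'\not\models\mathrm{K}p$, then $\mathscr{M}_{T',S}\not\models\mathrm{K}\mathrm{K}p$, so $\mathscr{M}_{T',S}$ falsifies the instance $\mathrm{K}p\rightarrow\mathrm{K}\mathrm{K}p$ of $\mathbf{KK}$; since that instance lies in $\mathbf{V}\cup\mathbf{K}\cup\mathbf{KK}$, this witnesses (via $T'$) that $\mathbf{V}\cup\mathbf{K}\cup\mathbf{KK}$ is not generic.

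So the real content is to construct a model $\mathscr{M}$ with $\mathscr{M}\models T'$ but $\mathscr{M}\not\models\mathrm{K}p$. I would first introduce an auxiliary model $w$ assigning $p$ the value $\mbox{False}$ and every other basic formula the value $\mbox{True}$ --- in particular $\mbox{True}$ to every formula of the form $\mathrm{K}\psi$. Then define $\mathscr{M}$ by stipulating $\mathscr{M}\models q$ for every propositional atom $q$, and $\mathscr{M}\models\mathrm{K}\psi$ if and only if $w\models\psi$. The verification that $\mathscr{M}\models T'$ is routine: $\mathscr{M}\models p$ since $p$ is an atom; $\mathscr{M}\models\mathbf{V}$ since every valid formula holds in $w$; $\mathscr{M}\models\mathbf{K}$ since modus ponens is sound in $w$ (from $w\models\psi\rightarrow\rho$ and $w\models\psi$ one gets $w\models\rho$, hence $\mathscr{M}\models\mathrm{K}\rho$); and $\mathscr{M}\models\mathbf{KK}$ because $w$ assigns $\mbox{True}$ to every $\mathrm{K}$-formula, so $\mathscr{M}\models\mathrm{K}\mathrm{K}\psi$ holds unconditionally. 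Finally $\mathscr{M}\not\models\mathrm{K}p$, since $\mathscr{M}\models\mathrm{K}p$ would force $w\models p$, contrary to the choice of $w$. This gives $T'\not\models\mathrm{K}p$, and the outline above then closes the argument.

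The one step requiring care --- really the only non-mechanical point --- is checking that $\mathbf{K}$ and $\mathbf{KK}$ both hold in $\mathscr{M}$ while $\mathrm{K}p$ still fails there. The apparent tension is that $\mathbf{KK}$ looks as though it should propagate ``known'' upward and drag $\mathrm{K}p$ in; the resolution is that the \emph{second} occurrence of $\mathrm{K}$ is evaluated entirely inside the bivalent auxiliary model $w$, where every $\mathrm{K}$-formula is simply $\mbox{True}$, so $\mathbf{KK}$ costs nothing, whereas $p$ itself is $\mbox{False}$ in $w$, so $\mathrm{K}p$ is never forced. I expect everything else --- unwinding the definitions of $\mathscr{M}_{T',S}$ and of genericity --- to be straightforward bookkeeping.
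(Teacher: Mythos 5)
Your proposal is correct and follows essentially the same route as the paper: the same witness theory $T'=\mathbf V\cup\mathbf K\cup\mathbf{KK}\cup\{p\}$, the same reduction to showing $T'\not\models\mathrm{K}p$, and a two-level counter-model (an outer model whose $\mathrm{K}$-facts are read off an auxiliary inner model in which $p$ fails). The only difference is cosmetic --- the paper's inner model makes all atoms false and reflects its own truth under $\mathrm{K}$, whereas your $w$ makes only $p$ false and every $\mathrm{K}$-formula true, which if anything makes the $\mathbf{KK}$ check slightly more immediate.
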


\begin{proof}
    Let $T=\mathbf V\cup\mathbf K\cup\mathbf{KK}$.
    Let $p$ be some propositional atom and let $T'=T\cup\{p\}$.
    We show that $\mathscr M_{T',\emptyset}\not\models \mathrm{K}p\rightarrow \mathrm{K}\mathrm{K}p$,
    whereby $\mathscr M_{T',\emptyset}\not\models\mathbf{KK}$ and so
     $\mathscr M_{T',\emptyset}\not\models T$, showing $T$ is not generic.  Clearly $T'\models p$, so $\mathscr M_{T',\emptyset}\models \mathrm{K}p$. What remains to show is that $\mathscr M_{T',\emptyset}\not\models \mathrm{K}\mathrm{K}p$ --- that is,
  $T'\not\models \mathrm{K}p$.  

To this end, inductively define models $\mathscr N_1$ and $\mathscr N_2$ simultaneously by stipulating $\mathscr N_1\models q$ and $\mathscr N_2\not\models q$ for each propositional atom $q$ and requiring
 that $\mathscr N_1$ and $\mathscr N_2$ interpret formulas $\mathrm{K}\varphi$
 in the following way:
    \begin{itemize}
        \item[] $\mathscr N_2\models \mathrm{K}\varphi$ if and only if
            $\mathscr N_2\models\varphi$; and
            \item[] $\mathscr N_1\models \mathrm{K}\varphi$ if and only if
            $\mathscr N_2\models\varphi$.
    \end{itemize}
    Since $\mathscr N_2\not\models p$, $\mathscr N_1\not\models \mathrm{K}p$.
    Thus, to show that $T'\not\models \mathrm{K}p$, and so conclude the proof,
    it suffices to show $\mathscr N_1\models T'$.

    Let $\varphi\in T'$.  Consider four cases:
\begin{itemize}[leftmargin=5em]
    \item[Case 1]  $\varphi\in\mathbf V$. Then $\varphi$ is $\mathrm{K}\varphi_0$ for some valid $\varphi_0$.
        Since $\varphi_0$ is valid, $\mathscr N_2\models \varphi_0$,
        so $\mathscr N_1\models \mathrm{K}\varphi_0$.

 \item[Case 2] $\varphi\in\mathbf K$. Then $\varphi$ has the form
        $\mathrm{K}(\psi\rightarrow\rho)\rightarrow (\mathrm{K}\psi\rightarrow \mathrm{K}\rho)$.
        Assume $\mathscr N_1\models \mathrm{K}(\psi\rightarrow\rho)$ and
        $\mathscr N_1\models \mathrm{K}\psi$.
        Then  $\mathscr N_2\models \psi\rightarrow\rho$
        and $\mathscr N_2\models \psi$. By modus ponens,
        $\mathscr N_2\models\rho$. Thus $\mathscr N_1\models \mathrm{K}\rho$, as desired.

    \item[Case 3] $\varphi\in\mathbf{KK}$. Then $\varphi$ has the form
        $\mathrm{K}\psi\rightarrow \mathrm{K}\mathrm{K}\psi$. Assume $\mathscr N_1\models \mathrm{K}\psi$.
        Then $\mathscr N_2\models \psi$, so $\mathscr N_2\models \mathrm{K}\psi$, whence $\mathscr N_1\models \mathrm{K}\mathrm{K}\psi$,
        as desired.

    \item[Case 4] $\varphi$ is $p$. Then $\mathscr N_1\models \varphi$ by construction.\qedhere
    \end{itemize}
\end{proof}
 Proposition \ref{basicfacts} can be used to establish an immediate  corollary of Theorem \ref{kknegative}.
\begin{corollary} The theories $\mathbf V\cup\mathbf{KK}$ and
$\mathbf K\cup\mathbf{KK}$ fail to be generic.\qedthm
\label{kkcorollary}
\end{corollary}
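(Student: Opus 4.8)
The plan is to derive both claims by contradiction from Theorem \ref{kknegative}, exploiting the closure of genericity under unions. Recall from Proposition \ref{basicfacts} that (1) a union of generic theories is generic, (4) $\mathbf V$ is generic, and (5) $\mathbf K$ is generic; and recall from Theorem \ref{kknegative} that $\mathbf V\cup\mathbf K\cup\mathbf{KK}$ is \emph{not} generic. These are the only ingredients needed.

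First I would handle $\mathbf V\cup\mathbf{KK}$. Suppose, for contradiction, that $\mathbf V\cup\mathbf{KK}$ were generic. Since $\mathbf K$ is generic by Proposition \ref{basicfacts}(5), Proposition \ref{basicfacts}(1) would give that the union $(\mathbf V\cup\mathbf{KK})\cup\mathbf K$ is generic. But this union is literally $\mathbf V\cup\mathbf K\cup\mathbf{KK}$, which contradicts Theorem \ref{kknegative}. Hence $\mathbf V\cup\mathbf{KK}$ is not generic.

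The argument for $\mathbf K\cup\mathbf{KK}$ is entirely symmetric: assuming $\mathbf K\cup\mathbf{KK}$ generic and invoking genericity of $\mathbf V$ (Proposition \ref{basicfacts}(4)) together with Proposition \ref{basicfacts}(1) yields that $(\mathbf K\cup\mathbf{KK})\cup\mathbf V = \mathbf V\cup\mathbf K\cup\mathbf{KK}$ is generic, again contradicting Theorem \ref{kknegative}.

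I do not expect any real obstacle here: the only thing to verify is the set-theoretic bookkeeping that the unions in question coincide with $\mathbf V\cup\mathbf K\cup\mathbf{KK}$, which is immediate. The substantive content of the negative result has already been discharged in the proof of Theorem \ref{kknegative}, via the two-point non-transitive Kripke-like models $\mathscr N_1,\mathscr N_2$; the present corollary merely propagates that failure downward along subtheories using the monotone behaviour of genericity under unions.
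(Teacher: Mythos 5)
Your proof is correct and is exactly the argument the paper intends: the corollary is stated as an immediate consequence of Theorem \ref{kknegative} via Proposition \ref{basicfacts}, using closure of genericity under unions together with the genericity of $\mathbf V$ and $\mathbf K$ to reduce both cases to the non-genericity of $\mathbf V\cup\mathbf K\cup\mathbf{KK}$. No gaps.
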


The following corollary follows from Theorem \ref{kknegative} and Lemma \ref{kkclosedgeneric}.
\begin{corollary}
    Not every closed generic theory is generic.\qedthm
\end{corollary}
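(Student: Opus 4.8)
The plan is to exhibit a single theory that witnesses the separation, and the natural candidate is $T=\mathbf V\cup\mathbf K\cup\mathbf{KK}$, since the two immediately preceding results bracket exactly this theory from both sides. First I would invoke Lemma \ref{kkclosedgeneric}, which already establishes that $T$ is closed generic. Then I would invoke Theorem \ref{kknegative}, which establishes that this same $T$ fails to be generic. Combining the two yields a closed generic theory that is not generic, which is precisely the statement of the corollary.

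Since both halves are already in hand, there is essentially no remaining obstacle — the corollary is a bookkeeping observation, and I expect the final write-up to be a one- or two-line proof. If one wanted a self-contained argument instead of a citation, the only real content would be a re-run of the proof of Theorem \ref{kknegative}: set $T'=T\cup\{p\}$ for a propositional atom $p$, observe $T'\models p$ so that $\mathscr M_{T',\emptyset}\models \mathrm K p$, and then construct the pair of models $\mathscr N_1,\mathscr N_2$ to show $T'\not\models \mathrm K p$, whence $\mathscr M_{T',\emptyset}\not\models \mathrm K\mathrm K p$ and therefore $\mathscr M_{T',\emptyset}\not\models\mathbf{KK}$. That model construction is the step that carries all the weight; the closed genericity of $T$ (which runs through simulated necessitation, Lemma \ref{necessitation}) and the final combination are both routine.

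The one point worth flagging for the reader is that the witnessing theory is the very theory shown to be closed generic in Lemma \ref{kkclosedgeneric}, so the corollary shows that the gap between closed genericity and genericity is already visible at the level of the standard schemas $\mathbf V$, $\mathbf K$, and $\mathbf{KK}$, rather than only for some contrived theory.
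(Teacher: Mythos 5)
Your proposal is correct and is exactly the paper's argument: the corollary is stated as an immediate consequence of Lemma \ref{kkclosedgeneric} and Theorem \ref{kknegative}, with $\mathbf V\cup\mathbf K\cup\mathbf{KK}$ as the witnessing theory. Nothing further is needed.
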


\begin{restatable}{theorem}{vkknotclosedgeneric} 
\label{vkknotclosedgeneric}
 
 If $\mathbf V\cup \mathbf{KK}$ is closed generic, then there is at most one propositional atom.

\end{restatable}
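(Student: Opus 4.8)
The plan is to prove the contrapositive: assuming there are (at least) two distinct propositional atoms $p$ and $q$, I will exhibit a closed theory $T'\supseteq \mathbf V\cup\mathbf{KK}$ and a set $S$ of propositional atoms witnessing the failure of closed genericity, i.e.\ with $\mathscr M_{T',S}\not\models\mathbf V\cup\mathbf{KK}$. Since $\mathbf V$ is generic (Proposition~\ref{basicfacts}), the witnessing model will have to falsify some instance $\mathrm K\psi\rightarrow\mathrm K\mathrm K\psi$ of $\mathbf{KK}$, which by the definition of $\mathscr M_{T',S}$ amounts to producing a $\psi$ with $T'\models\psi$ but $T'\not\models\mathrm K\psi$. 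The natural candidate is to let $T'$ be the smallest closed theory containing $\mathbf V\cup\mathbf{KK}\cup\{p,\ p\rightarrow q\}$ and take $\psi=q$: then $T'\models q$ by modus ponens, and the entire content of the argument is to establish $T'\not\models\mathrm Kq$. Note that this genuinely exploits the absence of $\mathbf K$: if $\mathbf K$ were also in $T'$, then since $T'$ is closed and contains $\mathbf V$, Simulated Necessitation (Lemma~\ref{necessitation}) would give $T'\models\mathrm Kq$ from $T'\models q$, and no such construction could exist.

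First I would pin down the shape of $T'$: every member of $T'$ is obtained by prefixing $\mathrm K$ finitely many times to a generator, i.e.\ has the form $\mathrm K\cdots\mathrm K\varphi$ with $\varphi\in\mathbf V\cup\mathbf{KK}\cup\{p,\ p\rightarrow q\}$ (the collection of such formulas is already closed and is contained in every closed theory containing the generators). In particular $q\notin T'$, because $q$ is a propositional atom distinct from $p$, no propositional atom begins with $\mathrm K$ or is an implication, and $q$ is neither $p$ nor $p\rightarrow q$; and $q$ is not valid.

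To show $T'\not\models\mathrm Kq$ I would construct a single model $\mathscr M$ with $\mathscr M\models T'$ and $\mathscr M\not\models\mathrm Kq$. Define $\mathscr M$ by: $\mathscr M\models p$, $\mathscr M\models q$, $\mathscr M\not\models r$ for every other propositional atom $r$, and, for each formula $\varphi$, $\mathscr M\models\mathrm K\varphi$ if and only if $\varphi\in T'$ or $\varphi$ is valid. Then $\mathscr M\not\models\mathrm Kq$ is immediate from the previous paragraph, and the remaining task is to verify $\mathscr M\models T'$ by cases on the members of $T'$: a member of the form $\mathrm K\varphi$ with $\varphi\in T'$ is handled directly by the definition of $\mathscr M$; a member of $\mathbf V$ of the form $\mathrm K\chi$ because $\chi$ is valid; the building blocks $p$ and $p\rightarrow q$ because $\mathscr M\models p$ and $\mathscr M\models q$; and a member of $\mathbf{KK}$ of the form $\mathrm K\psi\rightarrow\mathrm K\mathrm K\psi$ because, if $\mathscr M\models\mathrm K\psi$, then either $\psi\in T'$, so $\mathrm K\psi\in T'$ since $T'$ is closed, or $\psi$ is valid, so $\mathrm K\psi\in\mathbf V\subseteq T'$ --- in either case $\mathscr M\models\mathrm K\mathrm K\psi$.

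Finally I would assemble the conclusion: since $\mathscr M\models T'$ while $\mathscr M\not\models\mathrm Kq$ we have $T'\not\models\mathrm Kq$, whereas $T'\models q$; hence for every set $S$ of propositional atoms, $\mathscr M_{T',S}\models\mathrm Kq$ but $\mathscr M_{T',S}\not\models\mathrm K\mathrm Kq$, so $\mathscr M_{T',S}$ falsifies the $\mathbf{KK}$-instance $\mathrm Kq\rightarrow\mathrm K\mathrm Kq$ and therefore $\mathscr M_{T',S}\not\models\mathbf V\cup\mathbf{KK}$. Since $T'$ is a closed theory extending $\mathbf V\cup\mathbf{KK}$, this refutes closed genericity, completing the contrapositive. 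The step I expect to be the main obstacle is the $\mathbf{KK}$ case in the verification $\mathscr M\models T'$: one must get the definition of $\mathscr M$ exactly right so that the ``or $\varphi$ is valid'' clause is precisely compensated by the inclusion of $\mathbf V$ inside the closed theory $T'$ (guaranteeing $\mathrm K\psi\in T'$ whenever $\mathscr M$ is forced to make $\mathrm K\psi$ true), while still keeping $\mathrm Kq$ false --- balancing these two demands is the delicate point.
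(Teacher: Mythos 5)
Your proposal is correct, and its skeleton coincides with the paper's: the same witness $T'$ (the smallest closed theory generated by $\mathbf V\cup\mathbf{KK}\cup\{p,\,p\rightarrow q\}$), the same reduction to showing $T'\models q$ while $T'\not\models\mathrm Kq$, and the same observation that the whole construction lives off the absence of $\mathbf K$ (and hence of simulated necessitation). Where you genuinely diverge is in the countermodel establishing $T'\not\models\mathrm Kq$, which is the technical heart of the proof. The paper builds a pair of models $\mathscr N_1,\mathscr N_2$ in tandem --- essentially a two-world Kripke-style construction in which $\mathscr N_1$ ``knows'' $\varphi$ iff $\varphi$ holds in $\mathscr N_2$, patched with an ad hoc exception clause ``or $\varphi$ is $\mathrm K^n p$'' so that $\mathscr N_1$ still satisfies the generators $\mathrm K^n p$. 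Your model is instead purely syntactic: $\mathrm K\varphi$ holds iff $\varphi\in T'$ or $\varphi$ is valid. This buys you a shorter verification --- the $\mathbf{KK}$ instances become immediate from closure of $T'$ together with $\mathbf V\subseteq T'$, and no exception clause is needed --- at the price of having to pin down the exact membership of $T'$ (that every element is $\mathrm K^n$ of a generator, so in particular $q\notin T'$), which you do correctly. The paper's two-model construction, by contrast, needs no analysis of $T'$'s membership but requires the case-by-case check that $\mathscr N_1$ satisfies each family of generators. Both arguments are sound; yours is arguably the more economical, and it also makes transparent exactly which feature of $T'$ (its failure to contain $\mathrm Kq$) is responsible for the failure of $\mathbf{KK}$ at the meta-level.
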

The preceding theorem, like the one stated next, is proven in Appendix \ref{appendix:proofs}. 

\begin{theorem}\label{kukknotclosedgeneric}
    The theory  $\mathbf K\cup\mathbf{KK}$ is not closed generic.\qedthm
\end{theorem}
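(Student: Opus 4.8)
The plan is to show that $\mathbf{K}\cup\mathbf{KK}$ is not closed generic by exhibiting a set $S$ of propositional atoms and a closed theory $T'\supseteq \mathbf K\cup\mathbf{KK}$ such that $\mathscr M_{T',S}\not\models\mathbf K\cup\mathbf{KK}$. The guiding intuition is that, unlike the situation in Lemma~\ref{kkclosedgeneric}, here we do \emph{not} have $\mathbf V$ available, so the Simulated Necessitation lemma (Lemma~\ref{necessitation}) cannot be invoked for $T'$; this is precisely the hinge that made the $\mathbf{KK}$ case of Lemma~\ref{kkclosedgeneric} go through, so dropping $\mathbf V$ should break it. Concretely, I expect to take $T'$ to be the smallest closed theory containing $\mathbf K\cup\mathbf{KK}\cup\{p\}$ for a propositional atom $p$, and $S=\emptyset$, and then argue that $\mathscr M_{T',\emptyset}\models \mathrm{K}p$ (since $p\in T'$) but $\mathscr M_{T',\emptyset}\not\models\mathrm{K}\mathrm{K}p$, i.e., $T'\not\models\mathrm{K}p$. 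The failure of the instance $\mathrm{K}p\rightarrow\mathrm{K}\mathrm{K}p$ of $\mathbf{KK}$ then witnesses the failure of closed genericity. One subtlety: $T'$ must genuinely be \emph{closed}, so I will need to spell out that the closure of $\mathbf K\cup\mathbf{KK}\cup\{p\}$ adds formulas such as $\mathrm{K}p$, $\mathrm{K}\mathrm{K}p$, $\mathrm{K}$ of every instance of $\mathbf K$ and $\mathbf{KK}$, and so on, iterated; the claim $T'\not\models\mathrm{K}p$ must survive all of this.

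The technical core is therefore to prove $T'\not\models\mathrm{K}p$, which I would do by constructing a model $\mathscr M$ with $\mathscr M\models T'$ but $\mathscr M\not\models\mathrm{K}p$. Borrowing the device from the proof of Theorem~\ref{kknegative}, I would build a pair of mutually-defined models: a ``true-world'' model $\mathscr N_1$ that makes every propositional atom true and a ``false-world'' model $\mathscr N_2$ that makes every propositional atom false, with $\mathscr N_2\models\mathrm{K}\varphi \iff \mathscr N_2\models\varphi$ and $\mathscr N_1\models\mathrm{K}\varphi\iff\mathscr N_2\models\varphi$. Since $\mathscr N_2\not\models p$, we get $\mathscr N_1\not\models\mathrm{K}p$, so it suffices to show $\mathscr N_1\models T'$. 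Because $T'$ is the \emph{smallest} closed theory over $\mathbf K\cup\mathbf{KK}\cup\{p\}$, and the class of formulas true in $\mathscr N_1$ is itself closed under the $\mathrm K$-operator in the relevant sense (whenever $\mathscr N_1\models\psi$ we want $\mathscr N_1\models\mathrm K\psi$), I can argue $\mathscr N_1\models T'$ by showing $\mathscr N_1$ satisfies the generators and respects closure. Satisfaction of $\mathbf K$ and $\mathbf{KK}$ at $\mathscr N_1$ is checked exactly as in Cases~2 and~3 of the proof of Theorem~\ref{kknegative}, and $\mathscr N_1\models p$ holds by construction. For the closure step I must verify: if $\mathscr N_1\models\psi$ then $\mathscr N_1\models\mathrm K\psi$, equivalently $\mathscr N_2\models\psi$. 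This is the one place requiring care, and it is why I will instead phrase the argument as: let $X=\{\varphi:\mathscr N_1\models\varphi\text{ and }\mathscr N_2\models\varphi\}$, show $X$ is closed, contains $\mathbf K$, $\mathbf{KK}$, and $p$ (using that $\mathscr N_1$ and $\mathscr N_2$ both make all atoms and all valid formulas and all $\mathbf K$/$\mathbf{KK}$ instances true, and that $p\in X$ because $p$ is an atom — wait, $\mathscr N_2\not\models p$), hence $X\supseteq T'$; but since $\mathscr N_2\not\models p$ this particular $X$ fails, so the honest route is the direct induction below.

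Accordingly, the cleanest argument I would actually carry out is a direct induction on the construction of $T'$: define $T'$ as $\bigcup_n T_n$ where $T_0=\mathbf K\cup\mathbf{KK}\cup\{p\}$ and $T_{n+1}=T_n\cup\{\mathrm K\varphi:\varphi\in T_n\}$, and prove by induction on $n$ that $\mathscr N_1\models T_n$ \emph{and} $\mathscr N_2\models\psi$ for every $\psi\in T_n$ — in other words, both that $\mathscr N_1$ satisfies each formula and that $\mathscr N_2$ satisfies it. The base case handles $\mathbf K$, $\mathbf{KK}$ (valid arrows are true in both $\mathscr N_2$, since $\mathscr N_2\models\mathrm K\varphi\leftrightarrow\varphi$ and $\mathbf K,\mathbf{KK}$ instances are tautological consequences once one unfolds $\mathrm K$ in $\mathscr N_2$ — this needs a short check) and the atom $p$ only for the $\mathscr N_1$ half, while for the $\mathscr N_2$ half one needs $\mathscr N_2\models$ every instance of $\mathbf K$ and $\mathbf{KK}$, which follows because in $\mathscr N_2$ the operator $\mathrm K$ is transparent. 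The inductive step is immediate: if $\psi\in T_n$ then by hypothesis $\mathscr N_2\models\psi$, hence $\mathscr N_2\models\mathrm K\psi$ and $\mathscr N_1\models\mathrm K\psi$, and also $\mathscr N_2\models\mathrm K\psi$ again by transparency, so both halves pass to $T_{n+1}$. The main obstacle, and the step to present with care, is the base-case verification that $\mathscr N_2$ satisfies every instance of $\mathbf K$ and $\mathbf{KK}$; once that is in hand, $\mathscr N_1\models T'$ follows, $\mathscr N_1\not\models\mathrm K p$ gives $T'\not\models\mathrm K p$, hence $\mathscr M_{T',\emptyset}\not\models\mathrm K\mathrm K p$ while $\mathscr M_{T',\emptyset}\models\mathrm K p$, so $\mathscr M_{T',\emptyset}\not\models\mathbf{KK}$, establishing that $\mathbf K\cup\mathbf{KK}$ is not closed generic.
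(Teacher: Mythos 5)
Your choice of witness cannot work. You take $T'$ to be the \emph{smallest closed} theory containing $\mathbf K\cup\mathbf{KK}\cup\{p\}$ and aim to show $\mathscr M_{T',\emptyset}\models \mathrm{K}p$ but $\mathscr M_{T',\emptyset}\not\models \mathrm{K}\mathrm{K}p$, i.e., $T'\not\models \mathrm{K}p$. But closure of $T'$ together with $p\in T'$ forces $\mathrm{K}p\in T'$ (and $\mathrm{K}\mathrm{K}p\in T'$, etc.), so $T'\models \mathrm{K}p$ holds trivially by membership and $\mathscr M_{T',\emptyset}\models \mathrm{K}\mathrm{K}p$ after all. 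The instance $\mathrm{K}p\rightarrow\mathrm{K}\mathrm{K}p$ is therefore \emph{true} in $\mathscr M_{T',\emptyset}$, and your plan collapses at the very first step. This is exactly the difference between Theorem \ref{kknegative} (where $T'=T\cup\{p\}$ is \emph{not} closed, so $\mathrm{K}p\notin T'$) and the closed-generic setting: closure is precisely what rescues $\mathbf{KK}$ at any formula that is literally a member of the theory. Your own induction confirms the failure: your hypothesis requires $\mathscr N_2\models\psi$ for every $\psi\in T_n$, which is already false at $n=0$ for $\psi=p$ since $\mathscr N_2$ falsifies all atoms; you noticed this obstruction for the set $X$ but the ``direct induction'' has the identical defect, and indeed no model can satisfy $T'$ (which contains $\mathrm{K}p$) while falsifying $\mathrm{K}p$.

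The repair is to pick a witness $\psi$ with $T'\models\psi$ but $\mathrm{K}\psi\notin T'$, which means $\psi$ must be entailed without being a member. The paper does this by taking $T'$ to be just the iterated $\mathrm{K}$-closure of $\mathbf K\cup\mathbf{KK}$ (no added atom) and using the tautology $\psi=p\vee\neg p$: validity gives $T'\models p\vee\neg p$, hence $\mathscr M_{T',\emptyset}\models \mathrm{K}(p\vee\neg p)$, while the absence of $\mathbf V$ means nothing in $T'$ forces knowledge of tautologies; a single model $\mathscr N$ making all atoms true and setting $\mathscr N\models \mathrm{K}\varphi$ iff $\varphi$ is not ``bad'' (not $p\vee\neg p$ nor an implication chain terminating in it) satisfies $T'$ yet falsifies $\mathrm{K}(p\vee\neg p)$, giving $T'\not\models \mathrm{K}(p\vee\neg p)$ and hence $\mathscr M_{T',\emptyset}\not\models\mathbf{KK}$. (Compare Theorem \ref{vkknotclosedgeneric}, where the same membership-versus-entailment gap is engineered with two atoms via modus ponens because $\mathbf V$ is present and tautologies would not work.)
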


 The following corollary follows by Proposition \ref{basicfacts}.

\begin{corollary}
   The theory $\mathbf{KK}$ is not closed generic.\qedthm
\end{corollary}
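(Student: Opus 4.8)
The plan is to refute closed genericity of $\mathbf K\cup\mathbf{KK}$ directly: exhibit a set $S$ of propositional atoms and a closed theory $T'\supseteq\mathbf K\cup\mathbf{KK}$ for which $\mathscr M_{T',S}$ falsifies some instance of $\mathbf{KK}$. Fix a propositional atom $p$ (one exists, since the atoms form a nonempty set) and put $\psi=p\vee\neg p$, a valid formula for which, crucially, $\mathrm{K}\psi$ is \emph{not} valid. Take $S=\emptyset$ and let $T'$ be the smallest closed theory containing $\mathbf K\cup\mathbf{KK}$, namely $T'=\{\mathrm{K}^n\chi:n\geq 0,\ \chi\in\mathbf K\cup\mathbf{KK}\}$. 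Since $\psi$ is valid, $T'\models\psi$, so $\mathscr M_{T',\emptyset}\models\mathrm{K}\psi$ by the definition of $\mathscr M_{T',\emptyset}$. If I can show that $T'\not\models\mathrm{K}\psi$, then the same definition gives $\mathscr M_{T',\emptyset}\not\models\mathrm{K}\mathrm{K}\psi$; hence $\mathscr M_{T',\emptyset}$ falsifies the $\mathbf{KK}$-instance $\mathrm{K}\psi\to\mathrm{K}\mathrm{K}\psi$, and since $T'$ is closed and extends $\mathbf K\cup\mathbf{KK}$, this is exactly a witness that $\mathbf K\cup\mathbf{KK}$ is not closed generic.

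So the real work is to build a model of $T'$ that does not satisfy $\mathrm{K}\psi$. Let $D$ be the smallest set of formulas containing $\mathbf K\cup\mathbf{KK}$ and closed under modus ponens (if $\varphi\in D$ and $\varphi\to\chi\in D$ then $\chi\in D$) and under prefixing by $\mathrm{K}$ (if $\varphi\in D$ then $\mathrm{K}\varphi\in D$). Let $\mathscr N$ be the model sending every propositional atom to $\mathrm{False}$ and sending $\mathrm{K}\varphi$ to $\mathrm{True}$ exactly when $\varphi\in D$. A routine case check then shows $\mathscr N\models T'$: each instance of $\mathbf K$ holds because $D$ is closed under modus ponens, each instance of $\mathbf{KK}$ holds because $D$ is closed under $\mathrm{K}$-prefixing, and each $\mathrm{K}^n\chi$ with $n\geq 1$ holds because $\mathrm{K}^{n-1}\chi\in D$ (using $\mathbf K\cup\mathbf{KK}\subseteq D$ and $\mathrm{K}$-closure of $D$).

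What remains, and what I expect to be the main obstacle, is to verify that $\psi=p\vee\neg p\notin D$, so that $\mathscr N\not\models\mathrm{K}\psi$. I would do this by producing a syntactic invariant preserved throughout the generation of $D$. Say a formula has property $P$ if it can be written in right-associated form $\gamma_1\to\gamma_2\to\cdots\to\gamma_n$ with $n\geq 1$, where each $\gamma_i$ is a formula whose outermost symbol is $\mathrm{K}$ (the case $n=1$ meaning the formula itself has the form $\mathrm{K}(\cdot)$). Inspection of their shapes shows that every formula of $\mathbf K$ and of $\mathbf{KK}$ has property $P$; property $P$ is trivially preserved under $\mathrm{K}$-prefixing; and it is preserved under modus ponens, since if $\varphi\to\chi$ has $P$ then, being an implication, its decomposition has $n\geq 2$ and splits as $\varphi=\gamma_1$ and $\chi=\gamma_2\to\cdots\to\gamma_n$, which again has $P$. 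Hence $D\subseteq\{\varphi:\varphi\text{ has }P\}$. But $p\vee\neg p$ is a disjunction — neither of the form $\mathrm{K}(\cdot)$ nor an implication — so it fails property $P$, and therefore $p\vee\neg p\notin D$. Consequently $\mathscr N\models T'$ while $\mathscr N\not\models\mathrm{K}\psi$, so $T'\not\models\mathrm{K}\psi$, which completes the argument as laid out in the first paragraph.
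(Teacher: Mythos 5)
Your argument is correct, but it reaches the corollary by a different route than the paper. The paper treats this as an immediate consequence of its Theorem~\ref{kukknotclosedgeneric} (that $\mathbf K\cup\mathbf{KK}$ is not closed generic) together with Proposition~\ref{basicfacts}: if $\mathbf{KK}$ were closed generic, then so would be its union with the (closed) generic theory $\mathbf K$, a contradiction. You instead build the witness from scratch --- and in fact your witness does double duty, since your closed $T'$ contains $\mathbf{KK}$ and your model $\mathscr M_{T',\emptyset}$ falsifies an instance \emph{of} $\mathbf{KK}$; you should just say that last sentence explicitly, since as written you only draw the conclusion for $\mathbf K\cup\mathbf{KK}$. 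Your construction closely parallels the paper's proof of Theorem~\ref{kukknotclosedgeneric} (same closed theory $T'$, same target instance $\mathrm{K}(p\vee\neg p)\rightarrow\mathrm{K}\mathrm{K}(p\vee\neg p)$), but your countermodel is the dual of theirs: the paper uses a ``maximal'' knower that satisfies $\mathrm{K}\varphi$ for every $\varphi$ that is not a right-nested implication terminating in $p\vee\neg p$, whereas you use a ``minimal'' knower that satisfies $\mathrm{K}\varphi$ only for $\varphi$ in the deductive closure $D$, and then exclude $p\vee\neg p$ from $D$ via the syntactic invariant that everything in $D$ is a right-nested implication of $\mathrm{K}$-prefixed formulas. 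Both invariants are sound (yours is checked correctly: $P$ is preserved under $\mathrm{K}$-prefixing trivially and under modus ponens because unique readability forces $n\geq 2$ and peels off $\gamma_1$), and the minimal model arguably isolates the underlying reason more transparently: nothing in the closure of $\mathbf K\cup\mathbf{KK}$ ever forces knowledge of a tautology, since $\mathbf V$ is absent. The cost of your route is that it redoes work the paper already packaged into Theorem~\ref{kukknotclosedgeneric} and Proposition~\ref{basicfacts}; the benefit is a self-contained proof that does not depend on the union-closure machinery.
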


The proof of Theorem \ref{kknegative} illustrates a technique common to all proofs appearing in Appendix \ref{appendix:proofs} for the results stated in this section --- each argument proceeds by constructing pathological models. Investigating negative results about genericness and closed genericness using this technique  locates sharp edges at the boundaries of modal logic: we are led to consider models where common assumptions no longer hold, such as models where $\mathbf K$ fails or where $\mathbf V$ fails.

We have applied the theory of genericity to the Knower Paradox.
In the proofs of the following theorems, we will reverse the direction of
application, applying the Knower Paradox to the theory of genericity, rather
than vice versa.

\begin{theorem}
\label{nongericityofTthm}
    The theory $\mathbf{T}$ is not closed generic.
    In fact, no superset of $\mathbf{T}$ is closed generic.\qedthm
\end{theorem}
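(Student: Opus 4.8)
The plan is to reduce to the Knower Paradox (Theorem~\ref{knowerparadox}). Suppose for contradiction that some theory $U\supseteq\mathbf T$ were closed generic. Fix a propositional atom $p$ and consider the theory $T_0 = U\cup\{p\leftrightarrow\mathrm K\neg p\}$. I would then let $T'$ be the smallest closed theory containing $T_0$ --- equivalently, the smallest closed theory containing $U$, $\mathbf T$, and the biconditional $p\leftrightarrow\mathrm K\neg p$. Since $T'$ is closed and $T'\supseteq U$, closed genericity of $U$ gives $\mathscr M_{T',S}\models U$ for every set $S$ of propositional atoms; in particular $\mathscr M_{T',S}\models\mathbf T$.

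The key step is to show that $T'$ is nonetheless inconsistent, so that no such model can exist and we reach a contradiction. Here I would invoke the Knower Paradox directly: $T'$ is a closed theory containing $\mathbf T$ and the axiom $p\leftrightarrow\mathrm K\neg p$. However, the proof of Theorem~\ref{knowerparadox} also used $\mathbf V$ and $\mathbf K$ (to simulate necessitation via Lemma~\ref{necessitation}), which a general superset $U$ of $\mathbf T$ need not contain. So the honest route is: take $S=\emptyset$ and work with the concrete model $\mathscr M \coloneqq \mathscr M_{T',\emptyset}$. Since $\mathscr M\models\mathbf T$, from the instance $\mathrm K\neg p\rightarrow\neg p$ and the biconditional we get: if $\mathscr M\models\mathrm K\neg p$ then $\mathscr M\models\neg p$ and hence (by the biconditional, read right-to-left) $\mathscr M\models p$, a contradiction; so $\mathscr M\not\models\mathrm K\neg p$, i.e.\ $T'\not\models\neg p$, hence $\mathscr M\not\models p$ by the biconditional. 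But $\mathscr M\not\models p$ means $p\notin\emptyset$, which is true and gives no contradiction yet --- so I actually need to push knowledge in, which requires necessitation and thus $\mathbf V$, $\mathbf K$.

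The cleanest fix, and the one I expect the authors use: take $H=\mathbf V\cup\mathbf K$, which is generic (Proposition~\ref{basicfacts}), hence closed generic (part (3)); then $U\cup H$ is closed generic by part (2) of Proposition~\ref{basicfacts}, and it contains $\mathbf V$, $\mathbf K$, and $\mathbf T$. Now apply Theorem~\ref{knowerparadox} (or rather its hypotheses) with this enlarged background: the smallest closed theory $T_{KP}$ containing $\mathbf V\cup\mathbf K\cup\mathbf T\cup U$ and $p\leftrightarrow\mathrm K\neg p$ is inconsistent by exactly the argument of Theorem~\ref{knowerparadox} (that proof only needs $\mathbf V$, $\mathbf K$, $\mathbf T$, the biconditional, and closedness). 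On the other hand, closed genericity of $U\cup H$ forces $\mathscr M_{T_{KP},S}\models U\cup H\supseteq\mathbf T$, and one checks as in Theorem~\ref{firstmaintheorem}/\ref{secondmaintheorem} that this model satisfies all of $T_{KP}$ when $p\notin S$, contradicting inconsistency of $T_{KP}$.

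The main obstacle is bookkeeping around what exactly the "smallest closed theory" $T_{KP}$ contains and verifying that $\mathscr M_{T_{KP},S}$ models it: one must re-run the deduction-closure argument of Lemma~\ref{normalkripkeworkerlemma} and the case analysis of Theorem~\ref{firstmaintheorem} to see that adding the factivity axioms $\mathbf T$ is the \emph{only} thing that breaks, and that it genuinely breaks. Concretely, the contradiction crystallizes as: $\mathscr M_{T_{KP},S}\models\mathbf T$ and $\mathscr M_{T_{KP},S}\models p\leftrightarrow\mathrm K\neg p$ together with simulated necessitation (available since $T_{KP}$ is closed and contains $\mathbf V\cup\mathbf K$) re-derive the inconsistency inside the model, which is absurd. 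I would present this as: assume $U\supseteq\mathbf T$ closed generic, replace $U$ by $U\cup\mathbf V\cup\mathbf K$ (still closed generic, still $\supseteq\mathbf T$), and then Theorem~\ref{secondmaintheorem} applied to $H=U\cup\mathbf V\cup\mathbf K$ yields a consistent closed theory containing $\mathbf V,\mathbf K,\mathbf T$ and $p\leftrightarrow\mathrm K\neg p$, directly contradicting Theorem~\ref{knowerparadox}.
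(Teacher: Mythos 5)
Your final paragraph --- replace $U$ by $U\cup\mathbf V\cup\mathbf K$, note it is still closed generic by Proposition~\ref{basicfacts}, apply Theorem~\ref{secondmaintheorem} to conclude the resulting $T_{KP}$ is consistent, and contradict Theorem~\ref{knowerparadox} --- is exactly the paper's proof. The exploratory detour showing why the direct argument without $\mathbf V$ and $\mathbf K$ stalls is harmless but not needed; the correct argument is the one you settle on.
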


\begin{proof}
    Assume $\mathbf{T}^+\supseteq \mathbf{T}$ is closed generic.
    By Lemma \ref{basicfacts},
    $H=\mathbf{V}\cup \mathbf{K}\cup \mathbf{T}^+$
    is closed generic.
    Let $T_{KP}$ be as in Theorem \ref{secondmaintheorem}.
    By Theorem \ref{secondmaintheorem}, $T_{KP}$ is consistent.
    But it is easy to see that $T_{KP}$ is at least as
    strong as the theory of the same name from Theorem \ref{knowerparadox}
    (the Knower Paradox), which is \emph{inconsistent}. Absurd.
\end{proof}

In particular, $S4$ is not closed generic (and thus not generic),
and the same goes for $S5$. The following theorem implies that the same
also goes for $KD45$.

\begin{theorem}
    Let $\mathbf{5}$ be the schema consisting of all formulas of the form
    $\neg \mathrm{K}\phi\rightarrow \mathrm{K}\neg \mathrm{K}\phi$.
    No superset of $\mathbf{5}$ is closed generic.\qedthm
\end{theorem}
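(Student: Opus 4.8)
The plan is to mimic the proof strategy of Theorem~\ref{nongericityofTthm}: derive a Knower-style inconsistency from genericity together with $\mathbf{5}$, thereby contradicting the consistency guaranteed by Theorem~\ref{secondmaintheorem}. So suppose for contradiction that $\mathbf{5}^+ \supseteq \mathbf{5}$ is closed generic. By Proposition~\ref{basicfacts} (parts (1), (2), (4), (5)), the theory $H = \mathbf{V} \cup \mathbf{K} \cup \mathbf{5}^+$ is closed generic. Now instantiate Theorem~\ref{secondmaintheorem} with this $H$ and some propositional atom $p$: we obtain a theory $T_{KP}$, containing $H$, the biconditional $p \leftrightarrow \mathrm{K}\neg p$, the closure of those under $\mathrm{K}$, and the schema $\mathbf{T}$, which Theorem~\ref{secondmaintheorem} asserts is consistent. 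The goal is then to show $T_{KP}$ is in fact inconsistent, which is the desired absurdity.

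The heart of the argument is a purely syntactic derivation of a contradiction from the axioms available in $T_{KP}$, namely $\mathbf{V}$, $\mathbf{K}$, $\mathbf{T}$, $\mathbf{5}$, the axiom $p \leftrightarrow \mathrm{K}\neg p$, and closure under $\mathrm{K}$ (which, via Lemma~\ref{necessitation}, simulates necessitation since $T_{KP}$ contains $\mathbf{V}$ and $\mathbf{K}$ --- note $(T_{KP})_0$ is closed, and one must check the necessitation steps only use consequences of the closed part). First I would run the standard Knower derivation: from $\mathbf{T}$ and the biconditional, $T_{KP} \models \neg p$, hence by simulated necessitation $T_{KP} \models \mathrm{K}\neg p$, hence by the biconditional $T_{KP}\models p$ --- wait, that already gives inconsistency using only $\mathbf{T}$, which would contradict Theorem~\ref{secondmaintheorem}. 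The resolution is that Lemma~\ref{necessitation} requires the \emph{whole} theory to be closed, and $T_{KP}$ is not closed (only $(T_{KP})_0$ is), so $T_{KP}\models\neg p$ does not license $T_{KP}\models\mathrm{K}\neg p$. Thus the real work is to use $\mathbf{5}$ to push the needed instance of $\mathrm{K}\neg p$ into the \emph{closed} part $(T_{KP})_0$, where necessitation is available. Concretely: since $(T_{KP})_0$ is closed and contains $p\leftrightarrow\mathrm{K}\neg p$, it also contains $\mathrm{K}(p\leftrightarrow\mathrm{K}\neg p)$; combined with $\mathbf{K}$ this yields, inside $(T_{KP})_0$, the implications relating $\mathrm{K}p$, $\mathrm{K}\mathrm{K}\neg p$, and so on. The plan is to show $(T_{KP})_0 \models \mathrm{K}\neg p$ outright (using $\mathbf{5}$ applied to $\phi = \neg p$, i.e.\ $\neg\mathrm{K}\neg p \rightarrow \mathrm{K}\neg\mathrm{K}\neg p$, together with the biconditional giving $\neg\mathrm{K}\neg p \leftrightarrow \neg p$ and $\mathrm{K}\neg p \leftrightarrow p$, to derive a contradiction from the assumption $\neg\mathrm{K}\neg p$ purely within the closed theory), whereupon adding $\mathbf{T}$ immediately gives both $\neg p$ and $p$.

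The step I expect to be the main obstacle is getting the $\mathbf{5}$-instance to do its work \emph{inside the closed theory} $(T_{KP})_0$ rather than merely in $T_{KP}$; the whole subtlety of this paper's framework is that factivity ($\mathbf{T}$) is deliberately quarantined outside the closed part, so I must not inadvertently use $\mathbf{T}$ before closing off. The derivation within $(T_{KP})_0$ goes: from $p\leftrightarrow\mathrm{K}\neg p \in (T_{KP})_0$, closure gives $\mathrm{K}(p\leftrightarrow\mathrm{K}\neg p)$, and via $\mathbf{K}$ we get $\mathrm{K}p \leftrightarrow \mathrm{K}\mathrm{K}\neg p$ and $\mathrm{K}\neg\mathrm{K}\neg p \rightarrow \mathrm{K}\neg p$ (the latter needs $\mathrm{K}(\neg\mathrm{K}\neg p \rightarrow \neg p)$, obtained by closing the biconditional-derived implication and applying $\mathbf{K}$). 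The $\mathbf{5}$ instance $\neg\mathrm{K}\neg p \rightarrow \mathrm{K}\neg\mathrm{K}\neg p$ then chains to give $\neg\mathrm{K}\neg p \rightarrow \mathrm{K}\neg p$, i.e.\ $(T_{KP})_0\models \mathrm{K}\neg p$ by propositional logic; closing once more gives $\mathrm{K}\mathrm{K}\neg p \in (T_{KP})_0$ and the biconditional gives $\mathrm{K}p$, hence $p\in(T_{KP})_0$ (all still without $\mathbf{T}$, since $\mathrm{K}p$ and $p$ agree here only via $\mathbf{T}$ --- so actually $p$ is obtained only after adding $\mathbf{T}$). Finally, $T_{KP}\models\mathrm{K}\neg p$ and $\mathbf{T}\models\mathrm{K}\neg p\rightarrow\neg p$ give $T_{KP}\models\neg p$, while $T_{KP}\models p$ via the biconditional from $\mathrm{K}\neg p$ --- contradiction. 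I would double-check each closure invocation to confirm it only ever closes formulas already shown to lie in $(T_{KP})_0$, and that $\mathbf{T}$ is used strictly at the end; this bookkeeping is where an error would most likely hide.
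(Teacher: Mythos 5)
Your proof is correct and takes essentially the same approach as the paper: the paper's own proof is only the one-line remark that one should ``reformulate the Knower's Paradox using $\mathbf{5}$ instead of $\mathbf{T}$,'' and your derivation of $(T_{KP})_0\models \mathrm{K}\neg p$ from the $\mathbf{5}$-instance, the necessitated contrapositive of the biconditional, and the tautology $(\neg A\rightarrow A)\rightarrow A$ is precisely that reformulation, with $\mathbf{T}$ correctly applied only at the very end to turn $\mathrm{K}\neg p$ and $p$ into a contradiction. (Your parenthetical hesitation about whether obtaining $p$ needs $\mathbf{T}$ is a harmless wobble --- $p$ follows inside $(T_{KP})_0$ directly from $\mathrm{K}\neg p$ and the right-to-left direction of the biconditional --- and your final sentence states the contradiction correctly.)
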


\begin{proof}
    Similar to Theorem \ref{nongericityofTthm} by
    reformulating the Knower's Paradox using
    $\mathbf{5}$ instead of $\mathbf{T}$.
\end{proof}

\section{Discussion}
\label{conclusionsection}

There are different forms of genericity, two of which we have examined above: generic theories and closed generic theories. These forms are particularly nice because of closure under union (Proposition \ref{basicfacts} parts 1--2) and because they are simple enough that we can prove some results about them.

In future work, we intend to use closed generic theories to generalize Carlson's
consistency result \cite{carlson2000knowledge} (this is almost already done
in \cite{alexander2020self}, but not quite, because the latter paper relies on
an axiom called \emph{assigned validity} to avoid some tricky nuances, whereas
Carlson does not).

\section*{Acknowledgements}
The authors thank three anonymous reviewers as well as Rineke Verbrugge for  valuable comments and suggestions to help improve this manuscript. The authors also extend their gratitude to Alessandro Aldini, Michael Grossberg, Ali Kahn, Rohit Parikh, and Max Stinchcombe, for their generous feedback on prior drafts of this manuscript.

\appendix

\section{Proofs}
\label{appendix:proofs}

\necessitation*

\begin{proof}[Proof of Lemma \ref{necessitation}]
    By Lemma \ref{compactness}, there are $\varphi_1,\ldots,\varphi_n\in T$
    such that $\varphi_1\rightarrow\cdots\rightarrow\varphi_n\rightarrow\varphi$ is valid.
    By $\mathbf V$,
    \[
        T\models \mathrm{K}(\varphi_1\rightarrow\cdots\rightarrow\varphi_n\rightarrow\varphi).
    \]
    By repeated applications of $\mathbf K$,
    \[
        T\models \mathrm{K}(\varphi_1\rightarrow\cdots\rightarrow\varphi_n\rightarrow\varphi)
            \rightarrow \mathrm{K}\varphi_1\rightarrow\cdots\rightarrow \mathrm{K}\varphi_n
            \rightarrow \mathrm{K}\varphi.
    \]
    Since $T$ contains each $\varphi_i$, the closure of $T$ ensures
    $T$ contains each $\mathrm{K}\varphi_i$.
    Thus $T\models \mathrm{K}\varphi$.
\end{proof}

\firstmaintheorem*

\begin{proof}[Proof of Theorem \ref{firstmaintheorem}]
    Let $\varphi\in T_{KP}$, we must show $\mathscr M_{(T_{KP})_0,S}\models \varphi$.  Consider four cases:
\begin{itemize}[leftmargin=4em]
    \item[Case 1] $\varphi\in H$. Then $\mathscr M_{(T_{KP})_0,S}\models\varphi$ because $(T_{KP})_0\supseteq H$
    and $H$ is generic.

   \item[Case 2] $\varphi$ is $p\leftrightarrow \mathrm{K}\neg p$.
        Since $p\not\in S$, $\mathscr M_{(T_{KP})_0,S}\not\models p$, thus it suffices
        to show $\mathscr M_{(T_{KP})_0,S}\not\models \mathrm{K}(\neg p)$.
        Let $S'$ be a set of propositional atoms with $p\in S'$,
        and let $T_\infty$ be the set of all formulas.

   We claim $\mathscr M_{T_\infty, S'}\models (T_{KP})_0$.  To see this,
   let $\psi\in (T_{KP})_0$, we must show
    $\mathscr M_{T_\infty, S'}\models\psi$. Three subcases are to be considered:
\begin{itemize}[leftmargin=3em]
    \item[Subcase 1] $\psi\in H$. Then $\mathscr M_{T_\infty, S'}\models\psi$
        because $H$ is generic and $T_\infty\supseteq H$.

   \item[Subcase 2] $\psi$ is $p\leftrightarrow \mathrm{K}\neg p$.
        Since $p\in S'$, $\mathscr M_{T_\infty,S'}\models p$.
        And since $T_\infty$ contains all formulas, $T_\infty\models \neg p$,
        thus $\mathscr M_{T_\infty,S'}\models \mathrm{K}\neg p$.
        So $\mathscr M_{T_\infty,S'}\models\psi$.

   \item[Subcase 3] $\psi$ is $\mathrm{K}\rho$ for some $\rho$ such that $\rho\in(T_{KP})_0$.
        Since $T_\infty$ contains all formulas, $T_\infty\models \rho$,
        so $\mathscr M_{T_\infty,S'}\models \mathrm{K}\rho$.
\end{itemize}
    This shows $\mathscr M_{T_\infty, S'}\models (T_{KP})_0$. Now since $\mathscr M_{T_\infty,S'}\models (T_{KP})_0$ and $\mathscr M_{T_\infty,S'}\models p$,
    this shows $(T_{KP})_0\not\models \neg p$.
    Thus $\mathscr M_{(T_{KP})_0,S}\not\models \mathrm{K}(\neg p)$, as desired.

    \item[Case 3] $\varphi$ is $\mathrm{K}\psi$ for some $\psi$ such that $\psi\in(T_{KP})_0$.
        Since $(T_{KP})_0\models\psi$, by definition $\mathscr M_{(T_{KP})_0,S}\models \mathrm{K}\psi$.

    \item[Case 4] $\varphi\in T_{KP}\backslash (T_{KP})_0$. Then $\varphi$ is an instance of $\mathbf T$,
    i.e., $\varphi$ is $\mathrm{K}\psi\rightarrow\psi$ for some $\psi$.
    Assume $\mathscr M_{(T_{KP})_0,S}\models \mathrm{K}\psi$.
    Then $(T_{KP})_0\models\psi$. By Cases 1--3,
    $\mathscr M_{(T_{KP})_0,S}\models (T_{KP})_0$.
    Thus $\mathscr M_{(T_{KP})_0,S}\models\psi$.
    \end{itemize}
\end{proof}

\begin{definition} Given a formula $\varphi$, define $\mathrm{K}^{n}\varphi$ by recursion on $n\in\mathbb{N}$ by  
    $\mathrm{K}^0\varphi=\varphi$ and $\mathrm{K}^{n+1}\varphi=
    \mathrm{K}\mathrm{K}^n\varphi$.\qeddef
\end{definition}

\vkknotclosedgeneric*

\begin{proof}[Proof of Theorem \ref{vkknotclosedgeneric}]
    Let $T=\mathbf V\cup\mathbf{KK}$.
    Assume there exist distinct propositional atoms
    $p$ and $q$.
    Let $T'$ be the theory which contains:
    \begin{itemize}
        \item $K^n\varphi$ for all $n\in\mathbb N$ and all $\varphi\in\mathbf V$.
        \item $K^n\varphi$ for all $n\in\mathbb N$ and all $\varphi\in\mathbf{KK}$.
        \item $K^n(p\rightarrow q)$ for all $n\in\mathbb N$.
        \item $K^n p$ for all $n\in\mathbb N$.
    \end{itemize}
    Clearly $T'$ is closed and $T'\supseteq T$.
    We will show $\mathscr M_{T',\emptyset}\not\models Kq\rightarrow KKq$,
    so $\mathscr M_{T',\emptyset}\not\models T$, so $T$ is not closed generic.
    Since $T'$ contains $p$ and $p\rightarrow q$, by modus ponens $T'\models q$,
    so $\mathscr M_{T',\emptyset}\models Kq$.
    It remains only to show $\mathscr M_{T',\emptyset}\not\models KKq$,
    i.e., that $T'\not\models Kq$.

    Define models $\mathscr N_1$ and $\mathscr N_2$ inductively so that:
    \begin{itemize}
        \item For every propositional atom $a$, $\mathscr N_1\models a$.
        \item For every propositional atom $a$, $\mathscr N_2\not\models a$.
        \item For every formula $\varphi$, $\mathscr N_2\models K\varphi$ iff
            $\mathscr N_2\models\varphi$.
        \item For every formula $\varphi$, $\mathscr N_1\models K\varphi$ iff
            $\mathscr N_2\models\varphi$ or $\varphi$ is $K^np$ for some $n\in\mathbb N$.
    \end{itemize}
    Since $q$ is distinct from $p$ and $\mathscr N_2\not\models q$,
    we have $\mathscr N_1\not\models Kq$. So to show $T'\not\models Kq$ (and thus finish
    the proof), it suffices to show $\mathscr N_1\models T'$. Let $\varphi\in T'$.

    Case 1: $\varphi$ is $K^n\psi$ for some $n\in\mathbb N$ and some $\psi\in \mathbf{V}$.
        Then $\varphi$ is $K^{n+1}\psi_0$ for some valid $\psi_0$.
        Since $\psi_0$ is valid, $\mathscr N_2\models\psi_0$, and it follows
        that $\mathscr N_1\models K^{n+1}\psi_0$.

    Case 2: $\varphi$ is $K^n\psi$ for some $n\in\mathbb N$ and some $\psi\in\mathbf{KK}$.
        Then $\varphi$ is $K^n(K\rho\rightarrow KK\rho)$ for some $\rho$.
        To show $\mathscr N_1\models\varphi$, it suffices to show
        $\mathscr N_2\models K\rho\rightarrow KK\rho$.
        Assume $\mathscr N_2\models K\rho$, then by definition $\mathscr N_2\models KK\rho$,
        as desired.

    Case 3: $\varphi$ is $K^n(p\rightarrow q)$ for some $n\in\mathbb N$.
        Since $\mathscr N_2\not\models p$, we have $\mathscr N_2\models p\rightarrow q$,
        thus $\mathscr N_1\models K^n(p\rightarrow q)$.

    Case 4: $\varphi$ is $p$. Then $\mathscr N_1\models \varphi$ by definition.

    Case 5: $\varphi$ is $K^np$ for some $n>0$.
        Then $\mathscr N_1\models \varphi$ by definition.
\end{proof}

\begin{proof}[Proof of Theorem \ref{kukknotclosedgeneric}]
    Let $T=\mathbf K\cup\mathbf{KK}$. Let $T'$ be the theory consisting of:
    \begin{itemize}
        \item
        $K^n\varphi$ for all $n\in\mathbb N$ and all $\varphi\in \mathbf K$.
        \item
        $K^n\varphi$ for all $n\in\mathbb N$ and all $\varphi\in \mathbf{KK}$.
    \end{itemize}
    Clearly $T'$ is closed and $T'\supseteq T$.
    Let $p$ be a propositional atom.
    We will show 
    $\mathscr M_{T',\emptyset}\not\models K(p\vee\neg p)\rightarrow KK(p\vee\neg p)$,
    showing $\mathscr M_{T',\emptyset}\not\models T$ and thus proving $T$ is not closed
    generic.
    Clearly $T'\models p\vee\neg p$, thus
    $\mathscr M_{T',\emptyset}\models K(p\vee\neg p)$. It remains to show
    $\mathscr M_{T',\emptyset}\not\models KK(p\vee\neg p)$, i.e., that
    $T'\not\models K(p\vee\neg p)$.

    Call a formula \emph{bad} if it is either $p\vee\neg p$ or is of the form
    $\varphi_1\rightarrow\cdots\rightarrow\varphi_n\rightarrow (p\vee\neg p)$.
    Let $\mathscr N$ be the model such that:
    \begin{itemize}
        \item For every propositional atom $a$, $\mathscr N\models a$.
        \item For every formula $\varphi$, $\mathscr N\models K\varphi$ iff $\varphi$ is not bad.
    \end{itemize}
    Since $p\vee\neg p$ is bad, we have $\mathscr N\not\models K(p\vee\neg p)$.
    Thus to show $T'\not\models K(p\vee\neg p)$ (and thus finish the proof),
    it suffices to show $\mathscr N\models T'$. Let $\varphi\in T'$.

    Case 1: $\varphi\in\mathbf K$.
        Then $\varphi$ has the form $K(\psi\rightarrow\rho)\rightarrow K\psi\rightarrow K\rho$.
        Assume $\mathscr N\models K(\psi\rightarrow\rho)$ and
        $\mathscr N\models K\psi$.
        Then $\psi\rightarrow\rho$ is not bad. This implies $\rho$ is not bad,
        thus $\mathscr N\models K\rho$, as desired.

    Case 2: $\varphi\in\mathbf{KK}$.
        Then $\varphi$ has the form $K\psi\rightarrow KK\psi$.
        Clearly $K\psi$ is not bad, thus $\mathscr N\models KK\psi$,
        thus $\mathscr N\models \varphi$.

    Case 3: $\varphi$ is of the form
        $K(K(\psi\rightarrow \rho)\rightarrow K\psi\rightarrow K\rho)$.
        Clearly $K(\psi\rightarrow \rho)\rightarrow K\psi\rightarrow K\rho$
        is not bad, so $\mathscr N\models \varphi$.

    Case 4: $\varphi$ is of the form
        $K(K\psi\rightarrow KK\psi)$.
        Clearly $K\psi\rightarrow KK\psi$ is not bad, so $\mathscr N\models \varphi$.

    Case 5: $\varphi$ is $K^n\psi$ for some $\psi\in\mathbf K\cup\mathbf{KK}$
        and some $n\geq 2$.
        Then $\varphi$ has the form $KK\rho$ for some $\rho$.
        Clearly $K\rho$ is not bad, thus $\mathscr N\models KK\rho$.
\end{proof}

\nocite{*}

\bibliographystyle{eptcs}
\bibliography{prop}

\end{document}